\let\orgdescriptionlabel\descriptionlabel
\renewcommand*{\descriptionlabel}[1]{%
  \let\orglabel\label
  \let\label\@gobble
  \phantomsection
  \edef\@currentlabel{#1}%
  \let\label\orglabel
  \orgdescriptionlabel{(#1)}%
}
\newtheorem{theorem}{Theorem}[section]
\newtheorem{conjecture}[theorem]{Conjecture}
\newtheorem{proposition}[theorem]{Proposition}
\newtheorem{lemma}[theorem]{Lemma}
\newtheorem{corollary}[theorem]{Corollary}
\theoremstyle{remark}
\newtheorem{example}[theorem]{Example}
\newtheorem{remark}[theorem]{Remark}
\newcommand{\defn}[1]{{\color{green!50!black}\textit{#1}}}
\newcommand{\defs}{\stackrel{\mathrm{def}}{=}}
\newcommand{\ie}{\text{i.e.}\;}
\renewcommand{\th}{^{\mathrm{th}}}
\newcommand{\Symmetric}{\mathfrak{S}}
\newcommand{\oneline}[1]{#1}
\newcommand{\Inv}{\mathsf{Inv}}
\newcommand{\Des}{\mathsf{Des}}
\newcommand{\Weak}{\textbf{\textsf{Weak}}}
\newcommand{\Tamari}{\textbf{\textsf{Tam}}}
\newcommand{\id}{\textsf{e}}
\newcommand{\wo}{w_{o}}
\newcommand{\woa}{w_{o;\alpha}}
\newcommand{\NC}{\mathsf{Nonc}}
\newcommand{\NN}{\mathsf{Nonn}}
\newcommand{\proj}{\pi^{\downarrow}}
\newcommand{\JI}{\mathsf{JoinIrr}}
\newcommand{\MI}{\mathsf{MeetIrr}}
\newcommand{\perspective}{\doublebarwedge}
\newcommand{\Poset}{\mathbf{P}}
\newcommand{\Lattice}{\mathbf{L}}
\newcommand{\dual}{\textsf{d}}
\newcommand{\Covers}{\mathcal{E}}
\newcommand{\least}{\hat{0}}
\newcommand{\grtst}{\hat{1}}
\newcommand{\Con}{\mathsf{Con}}
\newcommand{\cg}{\mathsf{cg}}
\newcommand{\Galois}{\mathsf{Galois}}
\newcommand{\CLO}{\textbf{\textsf{CLO}}}
\newcommand{\Pf}{\mathbf{P}}
\newcommand{\refines}{\leq_{\mathsf{ref}}}
\newcommand{\BijNCAlign}{\Phi}
\title[Noncrossing Arc Diagrams, Tamari Lattices, and Parabolic Quotients]{Noncrossing Arc Diagrams, Tamari Lattices, and Parabolic Quotients of the Symmetric Group}
\author{Henri M{\"u}hle}
\address{Technische Universit{\"a}t Dresden, Institut f{\"u}r Algebra, Zellescher Weg 12--14, 01069 Dresden, Germany.}
\email{henri.muehle@tu-dresden.de}
\keywords{noncrossing arc diagrams, Tamari lattices, congruence-uniform lattices, trim lattices, core label order, parabolic quotients, symmetric group}
\subjclass[2010]{06B05 (primary), and 05E15 (secondary)}
\begin{document}

\allowdisplaybreaks

\normalem

\begin{abstract}
	Ordering permutations by containment of inversion sets yields a fascinating partial order on the symmetric group: the weak order.  This partial order is, among other things, a semidistributive lattice.  As a consequence, every permutation has a canonical representation as a join of other permutations.  Combinatorially, these canonical join representations can be modeled in terms of arc diagrams.  Moreover, these arc diagrams also serve as a model to understand quotient lattices of the weak order.  
	
	A particularly well-behaved quotient lattice of the weak order is the well-known Tamari lattice, which appears in many seemingly unrelated areas of mathematics.  The arc diagrams representing the members of the Tamari lattices are better known as noncrossing partitions.  Recently, the Tamari lattices were generalized to parabolic quotients of the symmetric group.  In this article, we undertake a structural investigation of these parabolic Tamari lattices, and explain how modified arc diagrams aid the understanding of these lattices.
\end{abstract}

\maketitle

\section{Introduction}
	\label{sec:introduction}
Given a permutation $w$ of $[n]\defs\{1,2,\ldots,n\}$, an \emph{inversion} of $w$ is a pair of indices for which the corresponding values of $w$ are out of order.  In other words, the number of inversions of $w$ is a measure of \emph{disorder} introduced by $w$.  A permutation is characterized by its inversion set, \ie the set of pairs encoding the locations of the inversions.

Containment of inversion sets introduces a partial order on the set $\Symmetric_{n}$ of \emph{all} permutations of $[n]$; the \emph{weak order}.  This partial order has many remarkable properties.  For instance, it is a lattice~\cites{yanagimoto69partial,guilbaud71analyse}.  The \emph{diagram} of the weak order is the graph on the vertex set $\Symmetric_{n}$ in which two permutations are related by an edge if they differ by swapping a \emph{descent}, \ie an inversion whose corresponding values are adjacent integers.  By construction, this diagram is isomorphic to the $1$-skeleton of the permutohedron~\cite{gaiha77adjacent}. 

Perhaps an even more remarkable property of the weak order on $\Symmetric_{n}$ is the fact that it is a \emph{semidistributive} lattice~\cite{duquenne94on}.  This means that every permutation has a canonical representation as a join of permutations; thus effectively solving the word problem for these lattices.  The members of these \emph{canonical join representations} are \emph{join-irreducible} permutations, \ie permutations with a unique descent.  

In \cite{caspard00lattice}, a property stronger than semidistributivity was established for the weak order on $\Symmetric_{n}$.  It was shown that weak order lattices are \emph{congruence uniform}, which ensures a bijective connection between join-irreducible permutations and join-irreducible \emph{lattice congruences}, \ie certain equivalence relations on $\Symmetric_{n}$ compatible with the lattice structure.

N.~Reading gave a combinatorial description of the canonical join representations in the weak order in terms of \emph{noncrossing arc diagrams}~\cite{reading15noncrossing}.  Each join-irreducible permutation of $\Symmetric_{n}$ corresponds to a unique arc connecting two distinct elements of $[n]$, and a certain \emph{forcing order} on these arcs can be used to characterize quotient lattices of the weak order.

One of these quotient lattices is the \emph{Tamari lattice}, first introduced in \cite{tamari51monoides} via a rotation transformation on binary trees.  When considered as a quotient lattice of the weak order on $\Symmetric_{n}$, the Tamari lattice---denoted by $\Tamari(n)$---arises as the subposet induced by \emph{$231$-avoiding} permutations, \ie permutations whose one-line notation does not contain a subword that normalizes to $231$~\cites{bjorner97shellable,reading06cambrian}.  

The Tamari lattices have an even richer structure than the weak order on permutations~\cite{hoissen12associahedra}.  The Tamari lattices inherit semidistributivity and congruence-uniformity from the weak order, but they are also \emph{trim}, \ie extremal and left modular~\cites{blass97moebius,markowsky92primes}.  The first property implies that their number of join-irreducible elements is as small as possible, and the second property entails some desirable topological properties.

The noncrossing arc diagrams representing the elements of $\Tamari(n)$ are precisely the \emph{noncrossing partitions} of $[n]$ introduced in \cite{kreweras72sur}; see~\cite{reading15noncrossing}.  Then, generalizing a geometric construction by N.~Reading, there is a natural way to reorder the elements of $\Tamari(n)$ which turns out to agree with the refinement order on noncrossing partitions~\cites{bancroft11shard,reading11noncrossing}.  

Let us expand on this construction a little bit.  Since the weak order is congruence uniform, we may use a perspectivity relation to label the edges in its diagram by join-irreducible permutations.  With any permutation $w$, we can associate a particular interval in the weak order by taking the meet of the elements covered by $w$.  The \emph{core label set} of $w$ is the set of labels appearing in this interval and the \emph{core label order} orders $\Symmetric_{n}$ with respect to containment of these core label sets.

Note that this construction is purely lattice-theoretic and depends only on a (finite) lattice $\Lattice$ and a labeling of the diagram of $\Lattice$.  Under certain hypotheses on this labeling, we can associate a \emph{core label order} $\CLO(\Lattice)$ with any labeled lattice.  A study of this core label order for congruence-uniform lattices was carried out in \cite{muehle19the}.  

In this article, we study a recent generalization of $\Tamari(n)$ which arise in the study of \emph{parabolic quotients} of $\Symmetric_{n}$.  Any integer composition $\alpha=(\alpha_{1},\alpha_{2},\ldots,\alpha_{r})$ of $n$ partitions the set $[n]$ into \emph{$\alpha$-regions}, \ie consecutive intervals of lengths $\alpha_{1}$, $\alpha_{2}$, $\ldots$, $\alpha_{r}$.  We then consider the set $\Symmetric_{\alpha}$ of permutations whose one-line notation---when partitioned into $\alpha$-regions---has only increasing blocks.  If $\alpha=(1,1,\ldots,1)$, this construction recovers $\Symmetric_{n}$.

The \emph{parabolic Tamari lattice} is the restriction of the weak order to the subset of $\Symmetric_{\alpha}$ consisting of those permutations avoiding certain $231$-patterns.  By \cite{muehle19tamari}*{Theorem~1}, the resulting partially ordered set, denoted by $\Tamari(\alpha)$, is a quotient lattice of the weak order on $\Symmetric_{\alpha}$.  We set out for a structural study of these lattices and certain related structures.  Our first main result establishes that $\Tamari(\alpha)$ is congruence uniform and trim.

\begin{theorem}\label{thm:alpha_tamari_structure}
	For all $n>0$ and every composition $\alpha$ of $n$, the lattice $\Tamari(\alpha)$ is congruence uniform and trim.
\end{theorem}

Since $\Tamari(\alpha)$ is congruence uniform, we may consider its core label order.  Using a modification of Reading's noncrossing arc diagrams, we may relate the core label order of $\Tamari(\alpha)$ to the refinement order on certain set partitions of $[n]$.  Exploiting the fact that $\Tamari(\alpha)$ is a quotient lattice of the weak order allows us to prove the following structural property of $\CLO\bigl(\Tamari(\alpha)\bigr)$.

\begin{theorem}\label{thm:alpha_tamari_core_label_order}
	Let $n>0$ and let $\alpha$ be a composition of $n$.  The core label order of $\Tamari(\alpha)$ is a meet-semilattice.  It is a lattice if and only if $\alpha=(n)$ or $\alpha=(1,1,\ldots,1)$.
\end{theorem}

As a consequence of Theorem~\ref{thm:alpha_tamari_structure}, $\Tamari(\alpha)$ is extremal and thus admits a canonical representation as a lattice of set pairs defined on a certain directed graph; the \emph{Galois graph}~\cites{markowsky92primes,thomas19rowmotion}.  We give a combinatorial characterization of this graph in terms of join-irreducible permutations.  We denote the (unique) join-irreducible permutation of $\Tamari(\alpha)$, whose only descent is $(a,b)$, by $w_{a,b}$.

\begin{theorem}\label{thm:alpha_tamari_galois_graph}
	Let $n>0$ and let $\alpha$ be a composition of $n$.  The Galois graph of $\Tamari(\alpha)$ is isomorphic to the directed graph whose vertices are the join-irreducible elements of $\Tamari(\alpha)$ and in which there exists a directed edge $w_{a,b}\to w_{a',b'}$ if and only if $w_{a,b}\neq w_{a',b'}$ and
	\begin{itemize}
		\item either $a$ and $a'$ belong to the same $\alpha$-region and $a\leq a'<b'\leq b$,
		\item or $a$ and $a'$ belong to different $\alpha$-regions and $a'<a<b'\leq b$, where $a$ and $b'$ belong to different $\alpha$-regions, too.
	\end{itemize}
\end{theorem}

Along the way we characterize the subposet of $\Tamari(\alpha)$ consisting of the join-irreducible permutations. 

\begin{theorem}\label{thm:alpha_tamari_irreducibles_poset}
	Let $n>0$ and let $\alpha=(\alpha_{1},\alpha_{2},\ldots,\alpha_{r})$ be a composition of $n$.  The poset of join-irreducible elements of $\Tamari(\alpha)$ consists of $r-1$ connected components, where for $j\in[r-1]$ the $j\th$ component is isomorphic to the direct product of an $\alpha_{j}$-chain and an $(\alpha_{j+1}+\alpha_{j+2}+\cdots+\alpha_{r})$-chain.
\end{theorem}

\medskip

This article is organized as follows.  In Section~\ref{sec:preliminaries}, we define the main objects considered here: (parabolic quotients of) the symmetric group, the weak order and the (parabolic) Tamari lattices.  In order to keep the combinatorial flow of this article going, we have collected the necessary order- and lattice-theoretic concepts in Appendix~\ref{sec:posets_lattices}.  We recommend to read the combinatorial parts of this article in order and refer to the appendix whenever unknown terminology is encountered.

In Section~\ref{sec:alpha_tamari_congruence_uniform}, we prove that the parabolic Tamari lattices are congruence uniform and study their associated core label order.  We investigate the join-irreducible elements in the parabolic Tamari lattices in Section~\ref{sec:alpha_tamari_trim} and prove our main results.  We conclude this article with an enumerative observation relating the generating function of the M{\"o}bius function in the core label order of the parabolic Tamari lattices and the generating function of antichains in certain partially ordered sets in Section~\ref{sec:alpha_chapoton}.

\section{Preliminaries}
	\label{sec:preliminaries}
\subsection{The symmetric group and the weak order}
	\label{sec:symmetric_group}
For $n>0$, the \defn{symmetric group} $\Symmetric_{n}$ is the group of permutations of $[n]\defs\{1,2,\ldots,n\}$ under composition.  For $w\in\Symmetric_{n}$ and $i\in[n]$ we write $w_{i}$ instead of $w(i)$.  The \defn{one-line notation} of $w$ is the string $\oneline{w_{1}\;w_{2}\;\ldots\;w_{n}}$.

For $i,j\in[n]$ with $i<j$, the permutation that exchanges $i$ and $j$ and fixes everything else is a \defn{transposition}; denoted by $t_{i,j}$.  If $j=i+1$, then we write $s_{i}$ instead of $t_{i,i+1}$.  

The one-line notation of $w\circ t_{i,j}$ is the same as the one-line notation of $w$ except that the $i\th$ and the $j\th$ entries are swapped.  The one-line notation of $t_{i,j}\circ w$ is the same as the one-line notation of $w$ except that the positions of the values $i$ and $j$ are swapped.

A \defn{(right) inversion} of $w$ is a pair $(i,j)$ with $i<j$ and $w_{i}>w_{j}$.  A \defn{(right) descent} of $w$ is a pair $(i,j)$ with $i<j$ and $w_{i}=w_{j}+1$.  Let $\Inv(w)$ denote the set of (right) inversions of $w$, and let $\Des(w)$ denote the set of (right) descents of $w$.

\begin{remark}
	We wish to emphasize that we consider ``inversions'' with respect to \emph{positions}, meaning that composing on the \emph{right} with a transposition swaps the entries in \emph{positions} $i$ and $j$.  
	
	It is much more common in Coxeter-Catalan theory to consider \emph{left} inversions with respect to \emph{values}.  The reason for choosing this convention is the fact that it is more convenient for us to spot membership in parabolic quotients this way.
\end{remark}

Ordering permutations of $[n]$ with respect to containment of their (right) inversion sets yields the \defn{(left) weak order}, denoted by $\leq_{L}$.  For any subset $X\subseteq\Symmetric_{n}$ we write $\Weak(X)\defs(X,\leq_{L})$ for the set $X$ partially ordered by $\leq_{L}$.  Figure~\ref{fig:weak_order_s4} shows $\Weak(\Symmetric_{4})$.  

\begin{figure}
	\centering
	\begin{subfigure}[t]{.45\textwidth}
		\centering
		\includegraphics[scale=.7,page=1]{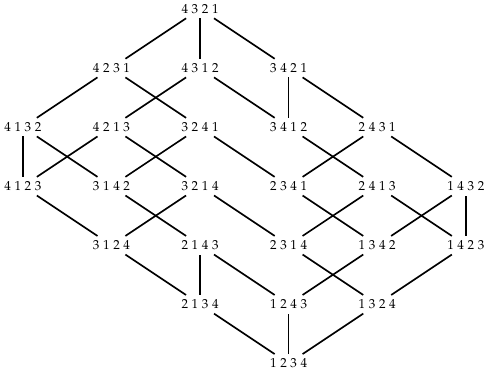}
		\caption{The lattice $\Weak(\Symmetric_{4})$.}
		\label{fig:weak_order_s4}
	\end{subfigure}
	\hspace*{1cm}
	\begin{subfigure}[t]{.45\textwidth}
		\centering
		\includegraphics[scale=.7,page=2]{figures_aoco.pdf}
		\caption{The lattice $\Weak\bigl(\Symmetric_{4}(231)\bigr)$.}
		\label{fig:tamari_s4}
	\end{subfigure}
	\caption{Two lattices of permutations.}
\end{figure}

It follows from definition of the (left) weak order that two permutations $u,v$ satisfy $u\lessdot_{L}v$ if and only if $\Inv(v)\setminus\Inv(u)=\bigl\{(i,j)\bigr\}$ and $v_{i}=v_{j}+1$.  In other words, $u\lessdot_{L}v$ if and only if $v=s_{u_{i}}\circ u$ and $v$ has more inversions than $u$.

\begin{theorem}[\cites{guilbaud71analyse,yanagimoto69partial,caspard00lattice}]\label{thm:weak_order_lattice}
	For all $n>0$, $\Weak(\Symmetric_{n})$ is a congruence-uniform lattice.
\end{theorem}

See Section~\ref{sec:congruence_uniform_lattices} for the definition of a congruence-uniform lattice.  A consequence of Theorem~\ref{thm:weak_order_lattice} is the existence of a least element (the \defn{identity} $\id\defs\oneline{1\;2\;\ldots\;n}$) and a greatest element (the \defn{long element} $\wo\defs\oneline{n\;n{-}1\;\ldots\;1}$) in $\Weak(\Symmetric_{n})$.

\subsection{$231$-avoiding permutations and the Tamari lattice}
	\label{sec:tamari_lattice}
We now exhibit an important sub- and quotient lattice of $\Weak(\Symmetric_{n})$, the \defn{Tamari lattice} $\Tamari(n)$.  

A \defn{$231$-pattern} in a permutation $w\in\Symmetric_{n}$ is a triple $(i,j,k)$ with $i<j<k$ and $w_{k}<w_{i}<w_{j}$.  Then, $w$ is \defn{$231$-avoiding} if it does not have a $231$-pattern.  Let $\Symmetric_{n}(231)$ denote the set of $231$-avoiding permutations of $[n]$.  

The Tamari lattice is, as far as we are concerned, the poset 
\begin{displaymath}
	\Tamari(n)\defs\Weak\bigl(\Symmetric_{n}(231)\bigr).
\end{displaymath}
This poset is named after D.~Tamari, who introduced it in \cite{tamari51monoides} via a partial order on binary trees and proved its lattice property.  The fact that $\Weak\bigl(\Symmetric_{n}(231)\bigr)$ incarnates $\Tamari(n)$ follows from \cite{bjorner97shellable}*{Theorem~9.6}.  Figure~\ref{fig:tamari_s4} shows $\Tamari(4)$.

The next theorem states some important, lattice-theoretic properties of $\Tamari(n)$.  See Sections~\ref{sec:congruence_uniform_lattices} and \ref{sec:trim_lattices} for the corresponding definitions.

\begin{theorem}[\cites{bjorner97shellable,blass97moebius,geyer94on,markowsky92primes,reading06cambrian}]\label{thm:tamari_properties}
	For all $n>0$, $\Tamari(n)$ is a sublattice and a quotient lattice of $\Weak(\Symmetric_{n})$.  Moreover, $\Tamari(n)$ is trim and congruence uniform.
\end{theorem}

\subsection{Parabolic quotients of the symmetric group}
	\label{sec:parabolic_quotients_symmetric_group}
Let $\alpha=(\alpha_{1},\alpha_{2},\ldots,\alpha_{r})$ be a composition of $n$, and define $p_{0}\defs 0$ and $p_{i}\defs\alpha_{1}+\alpha_{2}+\cdots+\alpha_{i}$ for $i\in[r]$.  We define the \defn{parabolic subgroup} of $\Symmetric_{n}$ with respect to $\alpha$ by 
\begin{displaymath}
	G_{\alpha} \defs \Symmetric_{\alpha_{1}}\times\Symmetric_{\alpha_{2}}\times\cdots\times\Symmetric_{\alpha_{r}}.
\end{displaymath}

The symmetric group $\Symmetric_{n}$ is generated by its set $\{s_{1},s_{2},\ldots,s_{n-1}\}$ of adjacent transpositions, and therefore any $w\in\Symmetric_{n}$ can be written as a product of the $s_{i}$'s. The \defn{length} of $w$ is the minimal number of adjacent transpositions needed to form $w$ as such a product.  It follows from \cite{yanagimoto69partial}*{Proposition~2.1} that the length of $w$ equals its number of inversions.

Let $\Symmetric_{\alpha}$ be the set of all minimal-length representatives of the left cosets of $G_{\alpha}$ in $\Symmetric_{n}$, \ie
\begin{align*}
	\Symmetric_{\alpha} & \defs \Bigl\{w\in\Symmetric_{n}\mid \bigl\rvert\Inv(w)\bigr\rvert<\bigl\lvert\Inv(ws_{i})\bigr\rvert\;\text{for}\;i\notin\{p_{1},p_{2},\ldots,p_{r-1}\}\Bigr\}\\
	& = \Bigl\{w\in\Symmetric_{n}\mid w_{i}<w_{i+1}\;\text{for}\;i\notin\{p_{1},p_{2},\ldots,p_{r-1}\}\Bigr\}.
\end{align*}
The elements of $\Symmetric_{\alpha}$ are \defn{$\alpha$-permutations}.  

For $i\in[r]$, the set $\bigl\{p_{i-1}{+}1,p_{i-1}{+}2,\ldots,p_{i}\bigr\}$ is the \defn{$i\th$ $\alpha$-region}.  We indicate $\alpha$ in the one-line notation of $w\in\Symmetric_{n}$ either by coloring every $\alpha$-region with a different color or by separating $\alpha$-regions by a vertical bar.  Figure~\ref{fig:weak_order_121} highlights the elements of $\Symmetric_{(1,2,1)}$ in $\Weak(\Symmetric_{4})$.    

\begin{figure}
	\centering
	\begin{subfigure}[t]{.45\textwidth}
		\centering
		\includegraphics[scale=.7,page=3]{figures_aoco.pdf}
		\caption{$\Weak(\Symmetric_{(1,2,1)})$ as an interval of $\Weak(\Symmetric_{4})$.}
		\label{fig:weak_order_121}
	\end{subfigure}
	\hspace*{1cm}
	\begin{subfigure}[t]{.45\textwidth}
		\centering
		\includegraphics[scale=.7,page=4]{figures_aoco.pdf}
		\caption{The lattice $\Weak\bigl(\Symmetric_{(1,2,1)}(231)\bigr)$.}
		\label{fig:tamari_121}
	\end{subfigure}
	\caption{Two lattices of $(1,2,1)$-permutations.}
	\label{fig:alpha_lattices}
\end{figure}

The set $\Symmetric_{\alpha}$ behaves quite well with respect to left weak order.

\begin{theorem}[\cite{bjorner88generalized}]\label{thm:alpha_weak_order_lattice}
	For all $n>0$ and every composition $\alpha$ of $n$, $\Weak(\Symmetric_{\alpha})$ is a principal order ideal in $\Weak(\Symmetric_{n})$.  Consequently, $\Weak(\Symmetric_{\alpha})$ is a congruence-uniform lattice.
\end{theorem}

A consequence of Theorem~\ref{thm:alpha_weak_order_lattice} is the existence of a greatest element in $\Weak(\Symmetric_{\alpha})$.  This element is denoted by $\woa$ and its one-line notation is of the following form:
\begin{displaymath}
	\oneline{\underbrace{n{-}p_{1}{+}1,n{-}p_{1}{+}2,\ldots,n}_{\alpha_{1}} \mid \underbrace{n{-}p_{2}{+}1,n{-}p_{2}{+}2,\ldots,n{-}p_{1}}_{\alpha_{2}}\mid \ldots \mid \underbrace{1,2,\ldots,n{-}p_{r-1}}_{\alpha_{r}}}.
\end{displaymath}
The vertical bars have no impact on the one-line notation, they shall only help separating the $\alpha$-regions.

Clearly, if $\alpha=(1,1,\ldots,1)$ is a composition of $n$, then $\Symmetric_{(1,1,\ldots,1)}=\Symmetric_{n}$.

\subsection{Parabolic $231$-avoiding permutations and the parabolic Tamari lattice}
	\label{sec:alpha_tamari_lattice}
Generalizing the constructions from Section~\ref{sec:tamari_lattice}, we now identify a particular quotient lattice of $\Weak(\Symmetric_{\alpha})$.  

An \defn{($\alpha,231)$-pattern} in an $\alpha$-permutation $w\in\Symmetric_{\alpha}$ is a triple $(i,j,k)$ with $i<j<k$ all in different $\alpha$-regions such that $w_{k}<w_{i}<w_{j}$ and $w_{i}=w_{k}+1$.  Then, $w$ is \defn{$(\alpha,231)$-avoiding} if it does not have an $(\alpha,231)$-pattern.  Let $\Symmetric_{\alpha}(231)$ denote the set of $(\alpha,231)$-avoiding permutations of $[n]$.

The \defn{$\alpha$-Tamari lattice} is the poset $\Tamari(\alpha)\defs\Weak\bigl(\Symmetric_{\alpha}(231)\bigr)$.  This name is justified by the following result.

\begin{theorem}[\cite{muehle19tamari}*{Theorem~1}]\label{thm:alpha_tamari_lattice}
	For all $n>0$ and every composition $\alpha$ of $n$, $\Tamari(\alpha)$ is a quotient lattice of $\Weak(\Symmetric_{\alpha})$.  
\end{theorem}

Figure~\ref{fig:tamari_121} shows the $(1,2,1)$-Tamari lattice.  We can witness in this example that $\Tamari(\alpha)$ is in general \emph{not} a sublattice of $\Weak(\Symmetric_{\alpha})$, since the permutations $\oneline{4\mid 1\;3\mid 2}$ and $\oneline{3\mid 2\;4\mid 1}$ have a different meet in $\Weak(\Symmetric_{(1,2,1)})$ than in $\Tamari\bigl((1,2,1)\bigr)$.  

Since $\Tamari(\alpha)$ is a quotient lattice of $\Weak(\Symmetric_{\alpha})$, there exists a surjective lattice map $\proj_{\alpha}\colon\Symmetric_{\alpha}\to\Symmetric_{\alpha}(231)$ which maps $w\in\Symmetric_{\alpha}$ to the greatest $(\alpha,231)$-avoiding permutation below $w$ in weak order~\cite{muehle19tamari}*{Lemma~12}.

\begin{remark}
	In general, we need to distinguish cover relations in $\Tamari(\alpha)$ from cover relations in $\Weak(\Symmetric_{\alpha})$, and we do so by using $\lessdot_{\alpha}$ (resp. $\lessdot_{L}$).  The reason is that for $u,v\in\Symmetric_{\alpha}$, $u\lessdot_{\alpha}v$ does not necessarily imply $u\lessdot_{L}v$; see Figure~\ref{fig:alpha_lattices}.  More generally, we indicate poset- and lattice-theoretic notions in $\Tamari(\alpha)$ with a subscript ``$\alpha$'', and in $\Weak(\Symmetric_{\alpha})$ with a subscript ``$L$''.
\end{remark}

Again, if $\alpha=(1,1,\ldots,1)$ is a composition of $n$, then it follows from \cite{muehle19tamari}*{Proposition~8} that $\Tamari\bigl((1,1,\ldots,1)\bigr)=\Tamari(n)$.  
In the remainder of this article, we study the $\alpha$-Tamari lattices from a lattice-theoretic and combinatorial perspective.

\section{The $\alpha$-Tamari lattices are congruence-uniform}
	\label{sec:alpha_tamari_congruence_uniform}
We start right away with the proof that $\Tamari(\alpha)$ is congruence uniform.

\begin{proposition}\label{prop:alpha_tamari_congruence_uniform}
	For all $n>0$ and every composition $\alpha$ of $n$, $\Tamari(\alpha)$ is congruence uniform. 
\end{proposition}
\begin{proof}
	By Theorem~\ref{thm:weak_order_lattice}, $\Weak(\Symmetric_{n})$ is congruence uniform and by Theorems~\ref{thm:alpha_weak_order_lattice} and \ref{thm:alpha_tamari_lattice}, $\Tamari(\alpha)$ is a quotient lattice of an interval of $\Weak(\Symmetric_{n})$.  By \cite{day79characterizations}*{Theorem~4.3}, congruence-uniformity is preserved under passing to sublattices and quotient lattices.  This proves the claim.
\end{proof}

\begin{corollary}\label{cor:alpha_tamari_semidistributive}
	For all $n>0$ and every composition $\alpha$ of $n$, $\Tamari(\alpha)$ is semidistributive.
\end{corollary}
\begin{proof}
	This follows from Proposition~\ref{prop:alpha_tamari_congruence_uniform} and Theorem~\ref{thm:congruence_uniform_semidistributive}.
\end{proof}

\subsection{Noncrossing $\alpha$-partitions}
	\label{sec:alpha_partitions}
Our next goal is a combinatorial description of the canonical join representations in $\Tamari(\alpha)$.  In preparation, we introduce another combinatorial family parametrized by $\alpha$.

An \defn{$\alpha$-arc} is a pair $(a,b)$, where $1\leq a<b\leq n$ and $a,b$ belong to different $\alpha$-regions.  Two $\alpha$-arcs $(a_{1},b_{1})$ and $(a_{2},b_{2})$ are \defn{compatible} if $a_{1}\neq a_{2}$ or $b_{1}\neq b_{2}$, and the following is satisfied:
\begin{description}
	\item[NC1\label{it:nc1}] if $a_{1}<a_{2}<b_{1}<b_{2}$, then either $a_{1}$ and $a_{2}$ lie in the same $\alpha$-region or $b_{1}$ and $a_{2}$ lie in the same $\alpha$-region;
	\item[NC2\label{it:nc2}] if $a_{1}<a_{2}<b_{2}<b_{1}$, then $a_{1}$ and $a_{2}$ lie in different $\alpha$-regions.
\end{description}

An \defn{$\alpha$-partition} is a set partition of $[n]$, where no block intersects an $\alpha$-region in more than one element.  Let $\Pi_{\alpha}$ denote the set of $\alpha$-partitions of $[n]$.  

Let $\Pf\in\Pi_{\alpha}$, and let $B\in\Pf$ be a block.  If $a,b\in B$, then we write $a\sim_{\Pf}b$.  A \defn{bump} of $\Pf$ is a pair $(a,b)$ such that $a,b\in B$ and there is no $c\in B$ with $a<c<b$.  

Clearly, any bump of $\Pf$ is an $\alpha$-arc.  An $\alpha$-partition is \defn{noncrossing} if its bumps are pairwise compatible $\alpha$-arcs.  We denote the set of all noncrossing $\alpha$-partitions by $\NC(\alpha)$.  We use the term \emph{parabolic noncrossing partitions} to refer to noncrossing $\alpha$-partitions for unspecific $\alpha$.

\begin{remark}
	If $\alpha=(1,1,\ldots,1)$, then every $\alpha$-region is a singleton, so that \eqref{it:nc2} will always be satisfied and \eqref{it:nc1} can never be satisfied.  Thus, the noncrossing $(1,1,\ldots,1)$-partitions are precisely the set partitions of $[n]$ without any two bumps $(a_{1},b_{1})$ and $(a_{2},b_{2})$ for $a_{1}<a_{2}<b_{1}<b_{2}$.  These \emph{ordinary noncrossing partitions} were introduced in \cite{kreweras72sur} and have been a frequent object of study ever since.
\end{remark}

We graphically represent an $\alpha$-arc $(a,b)$ as follows. We draw $n$ nodes on a horizontal line, label them by $1,2,\ldots,n$ from left to right, and group them together according to $\alpha$-regions.  Now we draw a curve leaving the node labeled $a$ to the bottom, staying below the $\alpha$-region containing $a$, moving up and over the subsequent $\alpha$-regions until it enters the node labeled $b$ from above.  

An $\alpha$-partition is noncrossing if and only if its bumps can be drawn in this manner such that no two curves intersect in their interior.  Likewise, any collection of pairwise compatible $\alpha$-arcs corresponds to a noncrossing $\alpha$-partition whose blocks are given by the connected components of the graphical representation of the $\alpha$-arcs.  Figure~\ref{fig:alpha_partition} shows a graphical representation of a noncrossing $\alpha$-partition.

\begin{figure}
	\centering
	\begin{subfigure}[t]{1\textwidth}
		\centering
		\includegraphics[scale=1,page=5]{figures_aoco.pdf}
		\caption{$\Pf=\bigl\{\{1,5,13\},\{2,7,16\},\{3\},\{4,9\},\{6\},\{8,12\},\{10\},\{11,15\},\{14\}\bigr\}\in\NC(\alpha)$.}
		\label{fig:alpha_partition}
	\end{subfigure}
	
	\vspace*{.5cm}

	\begin{subfigure}[t]{.3\textwidth}
		\centering
		\includegraphics[scale=.75,page=6]{figures_aoco.pdf}
		\caption{The poset $\vec{O}_{\Pf}$.}
		\label{fig:alpha_block_order}
	\end{subfigure}
	\hspace*{1cm}
	\begin{subfigure}[t]{.6\textwidth}
		\centering
		\includegraphics[scale=.75,page=7]{figures_aoco.pdf}
		\caption{The smaller parabolic noncrossing partitions $\Pf_{1}\in\NC\bigl((1,2,1,2,1)\bigr)$ (top) and $\Pf_{2}\in\NC\bigl((2,2,1,1)\bigr)$ (bottom).}
		\label{fig:alpha_partition_recursion}
	\end{subfigure}

	\vspace*{.5cm}
	
	\begin{subfigure}[t]{.8\textwidth}
		\centering
		\includegraphics[scale=1,page=8]{figures_aoco.pdf}
		\caption{The $(\alpha,231)$-avoiding permutation $\BijNCAlign_{\alpha}^{-1}(\Pf)$.}
		\label{fig:alpha_permutation}
	\end{subfigure}
	\caption{An illustration of Theorem~\ref{thm:nc_231_bijection} for $\alpha=(3,4,2,1,4,2)$.}
	\label{fig:nc_bijection}
\end{figure}

\begin{theorem}[\cite{muehle19tamari}*{Theorem~4.1}]\label{thm:nc_231_bijection}
	For all $n>0$ and every composition $\alpha$ of $n$, the sets $\Symmetric_{\alpha}(231)$ and $\NC(\alpha)$ are in bijection.  This bijection sends descents to bumps.
\end{theorem}

Let $\BijNCAlign_{\alpha}$ denote the bijection from Theorem~\ref{thm:nc_231_bijection}.  If $w\in\Symmetric_{\alpha}(231)$, then $\Des(w)$ is a collection of pairwise compatible $\alpha$-arcs; and thus corresponds to some $\BijNCAlign_{\alpha}(w)\in\NC(\alpha)$.  Conversely, if $\Pf\in\NC(\alpha)$, then we define an acyclic binary relation $\vec{R}_{\Pf}$ on the blocks of $\Pf$ by setting $(B,B')\in\vec{R}_{\Pf}$ if and only if there exists an $\alpha$-arc $(a,b)$ such that $a,b\in B$ and $a<\min B'<b$ for $a$ and $\min B'$ in different $\alpha$-regions.  

Let $\vec{O}_{\Pf}$ denote the reflexive and transitive closure of $\vec{R}_{\Pf}$.  Without loss of generality, we may assume that $B_{1}=\{i_{1},i_{2},\ldots,i_{k}\}\in\Pf$ with $1=i_{1}<i_{2}<\cdots<i_{k}$.  Then, $B_{1}$ is minimal in $\vec{O}_{\Pf}$.  We construct a permutation $w=\BijNCAlign_{\alpha}^{-1}(\Pf)\in\Symmetric_{\alpha}(231)$ inductively by setting $w_{i_{j+1}}=w_{i_{j}}-1$ for $j\in[k-1]$, and $w_{1}=\lvert X\rvert$, where $X$ is the union of the blocks in the order filter of $\vec{O}_{\Pf}$ generated by $B_{1}$.  The remaining values for $w$ are determined by considering two smaller parabolic noncrossing partitions $\Pf_{1}$ and $\Pf_{2}$, where $\Pf_{1}$ is the restriction of $\Pf$ to $X\setminus B_{1}$, and where $\Pf_{2}$ is the restriction of $\Pf$ to $[n]\setminus X$.  Note that $\vec{O}_{\Pf_{1}}$ and $\vec{O}_{\Pf_{2}}$ are induced subposets of $\vec{O}_{\Pf}$.  See Figure~\ref{fig:nc_bijection} for an illustration.

\subsection{Canonical join representations in $\Tamari(\alpha)$}
	\label{sec:alpha_join_representations}
We now explain how to use noncrossing $\alpha$-partitions to describe canonical join representations in $\Tamari(\alpha)$.  Essentially, we are going to prove that, for $w\in\Symmetric_{\alpha}(231)$, the set of bumps of $\BijNCAlign_{\alpha}(w)$ determines the canonical join representation of $w$ in $\Tamari(\alpha)$.

\begin{proposition}\label{prop:alpha_join_representations}
	For all $n>0$ and every composition $\alpha$ of $n$, the canonical join representation of $w\in\Symmetric_{\alpha}(231)$ in $\Tamari(\alpha)$ is $\bigl\{w_{a,b}\mid (a,b)\in\Des(w)\bigr\}$.
\end{proposition}

We now gather some ingredients required for the proof of Proposition~\ref{prop:alpha_join_representations}.

\begin{lemma}\label{lem:descents_covers}
	For $w\in\Symmetric_{\alpha}(231)$, the number of descents of $w$ equals the number of elements of $\Tamari(\alpha)$ covered by $w$.
\end{lemma}
\begin{proof}
	Let $w\in\Symmetric_{\alpha}(231)$, and let $n_{L}$ (resp. $n_{\alpha}$) denote the number of elements of $\Weak(\Symmetric_{\alpha})$ (resp. $\Tamari(\alpha))$ covered by $w$.

	By definition of the weak order, $n_{L}=\bigl\lvert\Des(w)\bigr\rvert$.  Since $\Weak(\Symmetric_{\alpha})$ is congruence uniform, $n_{L}$ equals the number of canonical joinands of $w$ in $\Weak(\Symmetric_{\alpha})$ by Corollary~\ref{cor:canonical_joinands_lower_covers}.  If $w\in\Symmetric_{\alpha}(231)$, then $\proj_{\alpha}(w)=w$ and Proposition~\ref{prop:canonical_joinands_quotient} implies that $n_{L}$ is the number of canonical joinands of $w$ in $\Tamari(\alpha)$, which is also $n_{\alpha}$.
\end{proof}

\begin{corollary}\label{cor:arcs_join_irreducibles}
	The set of $\alpha$-arcs is in bijection with the set $\JI\bigl(\Tamari(\alpha)\bigr)$ of join-irreducible elements of $\Tamari(\alpha)$.  
\end{corollary}
\begin{proof}
	An element $w\in\Tamari(\alpha)$ is join irreducible if and only if it covers a unique element.  By Lemma~\ref{lem:descents_covers}, $w$ is join irreducible if and only if it has a unique descent. By Theorem~\ref{thm:nc_231_bijection}, $\BijNCAlign_{\alpha}(w)$ is a noncrossing $\alpha$-partition with a unique bump, and thus corresponds to an $\alpha$-arc.  Since $\BijNCAlign_{\alpha}$ is a bijection, the claim follows.
\end{proof}

\begin{corollary}\label{cor:alpha_irreducibles}
	Let $(a,b)$ be an $\alpha$-arc, where $a$ belongs to the $j\th$ $\alpha$-region.  The corresponding join-irreducible element of $\Tamari(\alpha)$ is $w_{a,b}\in\Symmetric_{\alpha}(231)$ given by
	\begin{displaymath}
		w_{a,b}(i) = \begin{cases}i, & \text{if}\;i<a\;or\;i>b,\\ a+b-p_{j}+k, & \text{if}\;i=a+k\;\text{for}\;0\leq k\leq p_{j}-a,\\ a+k-1, & \text{if}\;i=p_{j}+k\;\text{for}\;1\leq k\leq b-p_{j}.\end{cases}
	\end{displaymath}
	The inversion set of $w_{a,b}$ is
	\begin{displaymath}
		\Inv(w_{a,b}) = \bigl\{(k,l)\mid a\leq k\leq p_{j}, p_{j}+1\leq l\leq b\bigr\}.
	\end{displaymath}
\end{corollary}

Corollary~\ref{cor:alpha_irreducibles} implies that the inversion set of $w_{a,b}$ can be read off easily from $\BijNCAlign(w_{a,b})$.  In fact, the first components of an inversion of $w_{a,b}$ are the nodes that lie weakly to the right of $a$ and weakly above the arc connecting nodes $a$ and $b$, the second components are the nodes that lie weakly below this arc.  This is illustrated in the following example in the case $n=8$, $a=2$, $b=6$; see also Figure~\ref{fig:alpha_irreducibles_inversions}.

\begin{figure}
	\centering
	\includegraphics[scale=1,page=33]{figures_aoco.pdf}
	\caption{Illustrating the inversion set of a join-irreducible $(\alpha,231)$-avoiding permutation.}
	\label{fig:alpha_irreducibles_inversions}
\end{figure}

\begin{example}
	Let $\alpha=(3,2,1,2)$.  The join-irreducible permutation $w_{2,6}\in\Symmetric_{\alpha}(231)$ is given by the one-line notation $\oneline{1\;5\;6\mid 2\;3\mid 4\mid 7\;8}$.  Its inversion set is
	\begin{displaymath}
		\Inv(w_{2,6}) = \bigl\{(2,4),(2,5),(2,6),(3,4),(3,5),(3,6)\bigr\}.
	\end{displaymath}
\end{example}

\begin{lemma}\label{lem:alpha_cover_labels}
	Let $u,v\in\Symmetric_{\alpha}(231)$ with $u\lessdot_{\alpha}v$.  There exists a unique $(a,b)\in\Des(v)$ such that $(a,b)\notin\Inv(u)$.
\end{lemma}
\begin{proof}
	Let $v\in\Symmetric_{\alpha}(231)$.  By Lemma~\ref{lem:descents_covers}, the number of permutations $u\in\Symmetric_{\alpha}(231)$ with $u\lessdot_{\alpha} v$ equals $\bigl\lvert\Des(v)\bigr\rvert$.  Thus, for every $(a,b)\in\Des(v)$ there is a unique $u\in\Symmetric_{\alpha}(231)$ with $u\lessdot_{\alpha}v$.  It remains to show that $(a,b)\notin\Inv(u)$.
	
	The permutation $u_{1}=s_{u_{a}}\circ v\in\Symmetric_{\alpha}$---in whose one-line notation the entries in positions $a$ and $b$ are swapped---satisfies $u_{1}\lessdot_{L}v$, and it follows $(a,b)\notin\Inv(u_{1})$.  Now, consider $u=\proj_{\alpha}(u_{1})\in\Symmetric_{\alpha}(231)$.  By construction, $u\leq_{L}u_{1}$, which means $\Inv(u)\subseteq\Inv(u_{1})$.  Thus, $(a,b)\notin\Inv(u)$.
\end{proof}

Recall the definition of perspective cover relations from Section~\ref{sec:congruence_uniform_lattices}.

\begin{proposition}\label{prop:alpha_cu_labeling}
	Let $u,v\in\Symmetric_{\alpha}(231)$ with $u\lessdot_{\alpha}v$, and let $(a,b)\in\Des(v)$ with $(a,b)\notin\Inv(u)$.  Then, $u\lessdot_{\alpha}v$ and ${w_{a,b}}_{*}\lessdot_{\alpha}w_{a,b}$ are perspective cover relations in $\Tamari(\alpha)$.
\end{proposition}
\begin{proof}
	Let $v\in\Symmetric_{\alpha}(231)$ and let $(a,b)\in\Des(v)$.  By Lemma~\ref{lem:alpha_cover_labels}, there is a unique $u\in\Symmetric_{\alpha}(231)$ with the desired properties.
	
	Suppose that $a$ is in the $j\th$ $\alpha$-region.  Since $v\in\Symmetric_{\alpha}(231)$, $v_{c}\leq v_{b}$ for all $c\in\{p_{j}{+}1,p_{j}{+}2,\ldots,b\}$.  By Corollary~\ref{cor:alpha_irreducibles}, $\Inv(w_{a,b})\subseteq\Inv(v)$ and thus $w_{a,b}\leq_{L}v$.
	
	Let $u_{1}=s_{u_{a}}\circ v\in\Symmetric_{\alpha}$.  Then $u_{1}\lessdot_{L}v$ and $u=\proj_{\alpha}(u_{1})$.  Then, $\Inv(u_{1})=\Inv(v)\setminus\bigl\{(a,b)\bigr\}$ and $\Inv({w_{a,b}}_{*})=\Inv(w_{a,b})\setminus\bigl\{(a,b)\bigr\}$.  In particular, ${w_{a,b}}_{*}\in\Symmetric_{\alpha}(231)$, and since $\proj_{\alpha}$ is a lattice map we conclude 
	\begin{align*}
		u\wedge_{\alpha}w_{a,b} & = \proj_{\alpha}(u_{1})\wedge_{\alpha}\proj_{\alpha}(w_{a,b}) = \proj_{\alpha}(u_{1}\wedge_{L}w_{a,b}) = \proj_{\alpha}({w_{a,b}}_{*}) = {w_{a,b}}_{*},\\
		u\vee_{\alpha}w_{a,b} & = \proj_{\alpha}(u_{1})\vee_{\alpha}\proj_{\alpha}(w_{a,b}) = \proj_{\alpha}(u_{1}\vee_{L}w_{a,b}) = \proj_{\alpha}(v) = v.
	\end{align*}
	By definition, $({w_{a,b}}_{*},w_{a,b})\perspective(u,v)$ in $\Tamari(\alpha)$.
\end{proof}

\begin{proof}[Proof of Proposition~\ref{prop:alpha_join_representations}]
	This follows from Proposition~\ref{prop:alpha_cu_labeling} using Lemma~\ref{lem:perspective_covers_same_label} and Theorem~\ref{thm:join_representation_lower_covers}.
\end{proof}

For $\alpha=(1,1,\ldots,1)$, Proposition~\ref{prop:alpha_join_representations} was previously found in \cite{reading07clusters}*{Example~6.3}.  In fact, since Theorem~\ref{thm:alpha_tamari_lattice} states that $\Tamari(\alpha)$ is a quotient lattice of $\Weak(\Symmetric_{\alpha})$, Proposition~\ref{prop:alpha_join_representations} follows immediately from Proposition~\ref{prop:canonical_joinands_quotient} in conjunction with \cite{reading11sortable}*{Theorem~8.1}.  However, because we need an explicit description of the join-irreducible $(\alpha,231)$-avoiding permutations later, we have decided to add some more details.

Moreover, for $\alpha=(1,1,\ldots,1)$, the canonical join complex of $\Tamari(\alpha)$ (see Section~\ref{sec:semidistributive_lattices}) was studied in \cite{barnard20tamari}.  In particular, it was shown in \cite{barnard20tamari}*{Theorem~1.3} that this complex is \emph{vertex decomposable}, a strong topological property introduced in \cite{provan80decompositions} which implies that this complex is homotopic to a wedge of spheres, shellable and Cohen-Macaulay.  We plan to investigate the canonical join complex of $\Tamari(\alpha)$ for arbitrary $\alpha$ in a follow-up article.  For the time being, we pose the following conjecture.

\begin{conjecture}\label{conj:alpha_join_complex_vertex_decomposable}
	For all $n>0$ and every composition $\alpha$ of $n$, the canonical join complex of $\Tamari(\alpha)$ is vertex decomposable.
\end{conjecture}

\subsection{The core label order of $\Tamari(\alpha)$}
	\label{sec:alpha_core_label_order}
In this section we study the core label order of $\Tamari(\alpha)$, see Section~\ref{sec:core_label_order}.  By Proposition~\ref{prop:alpha_tamari_congruence_uniform}, $\Tamari(\alpha)$ is congruence uniform, and thus admits an edge-labeling with join-irreducible $(\alpha,231)$-avoiding permutations, which is determined by the perspectivity relation; see Lemma~\ref{lem:perspective_covers_same_label} and Proposition~\ref{prop:alpha_cu_labeling}.

The core label order of $\Tamari(\alpha)$ orders the elements of $\Symmetric_{\alpha}(231)$ with respect to this labeling.  By Corollary~\ref{cor:arcs_join_irreducibles}, the elements of $\JI\bigl(\Tamari(\alpha)\bigr)$ correspond bijectively to $\alpha$-arcs.  Therefore, we may identify the core label set of $w\in\Symmetric_{\alpha}(231)$ with a collection of $\alpha$-arcs.

\begin{example}
	Let $\alpha=(1,2,1)$.  Figures~\ref{fig:symmetric_4_labeled} and \ref{fig:tamari_121_labeled} show the lattices $\Weak(\Symmetric_{\alpha})$ and $\Tamari(\alpha)$, where the edges are labeled by \eqref{eq:cu_labeling}.  In Figure~\ref{fig:tamari_121_labeled}, the nodes are additionally labeled by noncrossing $\alpha$-partitions.  Figures~\ref{fig:symmetric_4_clo} and \ref{fig:tamari_121_clo} show the corresponding core label orders.
\end{example}

\begin{figure}
	\centering
	\begin{subfigure}[t]{1\textwidth}
		\centering
		\includegraphics[scale=.9,page=16]{figures_aoco.pdf}
		\caption{The lattice $\Weak(\Symmetric_{4})$ labeled by \eqref{eq:cu_labeling}, with the interval $\Weak\bigl(\Symmetric_{(1,2,1)}\bigr)$ highlighted.}
		\label{fig:symmetric_4_labeled}
	\end{subfigure}

	\vspace*{.5cm}
	
	\begin{subfigure}[t]{1\textwidth}
		\centering
		\includegraphics[width=\textwidth,page=17]{figures_aoco.pdf}
		\caption{The lattice $\CLO\bigl(\Weak(\Symmetric_{4})\bigr)$ with the order ideal $\CLO\bigl(\Weak(\Symmetric_{(1,2,1)})\bigr)$ highlighted.}
		\label{fig:symmetric_4_clo}
	\end{subfigure}
	
	\vspace*{.5cm}
	
	\begin{subfigure}[t]{.45\textwidth}
		\centering
		\includegraphics[scale=.7,page=18]{figures_aoco.pdf}
		\caption{The lattice $\Tamari\bigl((1,2,1)\bigr)$ labeled by \eqref{eq:cu_labeling}.}
		\label{fig:tamari_121_labeled}
	\end{subfigure}
	\hspace*{1cm}
	\begin{subfigure}[t]{.45\textwidth}
		\centering
		\includegraphics[scale=.7,page=19]{figures_aoco.pdf}
		\caption{The core label order of $\Tamari\bigl((1,2,1)\bigr)$.}
		\label{fig:tamari_121_clo}
	\end{subfigure}
	\caption{Two lattices of $(1,2,1)$-permutations and their core label orders.}
	\label{fig:alpha_clo}
\end{figure}

Since both $\Weak(\Symmetric_{\alpha})$ and $\Tamari(\alpha)$ are congruence-uniform lattices, it makes sense to distinguish the corresponding core label sets.  For $u\in\Symmetric_{\alpha}(231)$, we write $\Psi_{L}(u)$ for the core label set in $\Weak(\Symmetric_{\alpha})$, and $\Psi_{\alpha}(u)$ for the core label set in $\Tamari(\alpha)$.

We now show that for $u\in\Symmetric_{\alpha}(231)$, the core label set $\Psi_{\alpha}(u)$ induces a noncrossing $\alpha$-partition.  To that end, we define
\begin{displaymath}
	X(u) \defs \bigl\{w_{a,b}\mid a\sim_{\BijNCAlign_{\alpha}(u)}b\bigr\}.
\end{displaymath}

\begin{proposition}\label{prop:core_label_set_is_noncrossing}
	Let $\alpha$ be a composition of $n>0$.  For all $u\in\Symmetric_{\alpha}(231)$, $\Psi_{\alpha}(u)\subseteq X(u)$.
\end{proposition}
\begin{proof}
	By Theorem~\ref{thm:alpha_tamari_lattice}, $\Tamari(\alpha)$ is a quotient lattice of $\Weak(\Symmetric_{\alpha})$  by a lattice congruence $\Theta_{\alpha}$.  
	
	Let $u\in\Symmetric_{\alpha}(231)$.  If $\Psi_{\alpha}(u)=\emptyset$, then there is nothing to show.  Otherwise, Theorem~\ref{thm:join_representation_lower_covers} implies that $\Des(u)=\bigl\{(a_{1},b_{1}),(a_{2},b_{2}),\ldots,(a_{t},b_{t})\bigr\}\neq\emptyset$.  We denote by $G$ the subgroup of $\Symmetric_{\alpha}$ generated by the transpositions corresponding to these descents.
	
	Since $\Psi_{\alpha}(u)\neq\emptyset$, we may pick any $w_{a,b}\in\Psi_{\alpha}(u)$.  By Lemma~\ref{lem:core_label_sets_quotients}, $w_{a,b}\in\Psi_{L}(u)$.  Since $\Weak(\Symmetric_{\alpha})$ is a principal order ideal in $\Weak(\Symmetric_{n})$ and $\Symmetric_{n}$ is a Coxeter group, \cite{stump18cataland}*{Theorem~2.10.5} thus implies that $w_{a,b}\in G$ and $\Inv(w_{a,b})\subseteq\Inv(u)$.  Since $w_{a,b}\in G$, we may write the transposition swapping $a$ and $b$ as a product of the generators of $G$.  This implies $a\sim_{\BijNCAlign_{\alpha}(u)}b$, and therefore $w_{a,b}\in X(u)$.
\end{proof}

\begin{example}\label{ex:strict_inclusion}
	Let $\alpha=(1,2,1)$ and consider $u=\oneline{3\mid 2\;4\mid 1}\in\Symmetric_{\alpha}$.  Then, $\BijNCAlign(u)=\bigl\{\{1,2,4\},\{3\}\bigr\}$ and therefore $X(u)=\bigl\{w_{1,2},w_{1,4},w_{2,4}\bigr\}$.  The subgroup $G$ from the proof of Proposition~\ref{prop:core_label_set_is_noncrossing} is generated by $w_{1,2}$ and $w_{2,4}$.  It follows that $X(u)\subseteq G$.
	
	We immediately see that $\Inv(u)=\bigl\{(1,2),(1,4),(2,4),(3,4)\bigr\}$.  Moreover, we obtain from Corollary~\ref{cor:alpha_irreducibles} that
	\begin{align*}
		\Inv(w_{1,2}) & = \bigl\{(1,2)\bigr\},\\
		\Inv(w_{1,4}) & = \bigl\{(1,2),(1,3),(1,4)\bigr\},\\
		\Inv(w_{2,4}) & = \bigl\{(2,4),(3,4)\bigr\}.
	\end{align*}
	Thus, $\Inv(w_{1,2})\subseteq\Inv(u)$ and $\Inv(w_{2,4})\subseteq\Inv(u)$, but $\Inv(w_{1,4})\not\subseteq\Inv(u)$.  Now, since $w_{a,b}\in\Psi_{L}(u)$ if and only if $w_{a,b}\in G$ and $\Inv(w_{a,b})\subseteq\Inv(u)$ we conclude that $w_{1,4}\notin\Psi_{L}(u)$.  By Lemma~\ref{lem:core_label_sets_quotients}, $w_{1,4}\notin\Psi_{\alpha}(u)$.  
	
	By inspection of Figure~\ref{fig:symmetric_4_labeled}, we observe that $\Psi_{L}(u)$ contains the irreducible permutations $j_{1}=\oneline{3\mid 1\;4\mid 2}$ and $j_{2}=\oneline{2\mid 3\;4\mid 1}$, both of which contain an $(\alpha,231)$-pattern in positions $(1,3,4)$.
\end{example}

The next proposition characterizes the compositions for which equality holds in Proposition~\ref{prop:core_label_set_is_noncrossing}.

\begin{proposition}\label{prop:core_label_set_is_noncrossing_partition}
	Let $\alpha$ be a composition of $n>0$.  Then, $\Psi_{\alpha}(u)=X(u)$ for all $u\in\Symmetric_{\alpha}(231)$ if and only if either $\alpha=(n)$ or $\alpha=(p,1,1,\ldots,1,q)$ for some integers $p,q>0$.
\end{proposition}
\begin{proof}
	If $\alpha=(n)$, then $\Symmetric_{\alpha}(231)=\{\id\}$ and $\Psi_{\alpha}(\id)=\emptyset=X(\id)$.  Now, suppose that $\alpha=(p,1,1,\ldots,1,q)$ for some integers $p,q>0$.  Let $u\in\Symmetric_{\alpha}(231)$ with $\Des(u)=\bigl\{(a_{1},b_{1}),(a_{2},b_{2}),\ldots,(a_{t},b_{t})\bigr\}$.
	
	By Proposition~\ref{prop:core_label_set_is_noncrossing}, $\Psi_{\alpha}(u)\subseteq X(u)$.  In order to show the reverse inclusion, we pick 
	$w_{a,b}\in X(u)$ and prove that $w_{a,b}\in\Psi_{\alpha}(u)$.  Using \cite{stump18cataland}*{Theorem~2.10.5} as in the proof of Proposition~\ref{prop:core_label_set_is_noncrossing} it is enough to show that $\Inv(w_{a,b})\subseteq\Inv(u)$.
	
	By definition, there exists a sequence of integers $k_{0},k_{1},\ldots,k_{t}$ such that $a=k_{0}$ and $b=k_{t}$ and $(k_{i-1},k_{i})$ is a bump of $\BijNCAlign_{\alpha}(u)$ for all $i\in[t]$.  In particular, all the $k_{i}$ lie in different $\alpha$-regions.  By Theorem~\ref{thm:nc_231_bijection}, $(k_{i-1},k_{i})\in\Des(u)$ for all $i\in[t]$.  Thus, $(a,b)\in\Inv(u)$.
	
	If $t=1$, then $(a,b)\in\Des(u)$.  By Proposition~\ref{prop:alpha_join_representations}, $w_{a,b}$ is a canonical joinand of $u$, which implies $w_{a,b}\in\Psi_{\alpha}(u)$.
	
	If $t>1$, then we consider two cases.  If $a>p$, then 
	\begin{displaymath}
		\Inv(w_{a,b}) = \bigl\{(a,a{+}1),(a,a{+}2),\ldots,(a,b)\bigr\}
	\end{displaymath}
	by Corollary~\ref{cor:alpha_irreducibles} and our assumption on the shape of $\alpha$.  Let $d\in\{a{+}1, a{+}2, \ldots, b\}$.  By construction, there exists $(k_{i-1},k_{i})\in\Des(u)$ such that $k_{i-1}<d\leq k_{i}$.  Since $u$ avoids any $(\alpha,231)$-pattern, it follows that $u_{d}<u_{k_{i-1}}<u_{k_{i-2}}<\cdots<u_{k_{0}}=u_{a}$.  Thus, $(a,d)\in\Inv(u)$.  It follows that $\Inv(w_{a,b})\subseteq\Inv(u)$ as desired.
	
	If $p\leq a$, then Corollary~\ref{cor:alpha_irreducibles} implies
	\begin{displaymath}
		\Inv(w_{a,b}) = \bigl\{(a',b')\mid a'\in\{a,a{+}1,\ldots,p\}, b'\in\{p{+}1,p{+}2,\ldots,b\}\bigr\}.
	\end{displaymath}
	As before we may show that $(a,d)\in\Inv(u)$ for any $d\in\{p{+}1,p{+}2,\ldots,b\}$.  Since $u\in\Symmetric_{\alpha}$, we have $u_{a}<u_{a'}$ for any $a'\in\{a{+}1,a{+}2,\ldots,p\}$.  This implies $\Inv(w_{a,b})\subseteq\Inv(u)$.
	
	We conclude that $w_{a,b}\in\Psi_{L}(u)$.  Since, by construction, $w_{a,b}\in\Symmetric_{\alpha}(231)$, it follows that $w_{a,b}\in\Psi_{\alpha}(u)$.
	
	\medskip
	
	Now suppose that $\alpha=(\alpha_{1},\alpha_{2},\ldots,\alpha_{r})$ is a composition of $n$ which is not of the form $\alpha=(p,1,1,\ldots,q)$.  Then, $r\geq 3$ and there exists $k\in\{2,3,\ldots,r-1\}$ such that $p_{k}>p_{k-1}+1$.  

	Let $a=p_{k-1}$ and consider $\Pf\in\NC(\alpha)$ whose only non-singleton blocks are $\{a,a{+}1\}$ and $\{a{+}1,n\}$, and let $u=\BijNCAlign_{\alpha}^{-1}(\Pf)$.  Then, $u$ has one-line notation
	\begin{displaymath}
		\underbrace{1,2,\ldots,a{-}1,n{-}\alpha_{k}{+}1}_{p_{k-1}}\mid \underbrace{n{-}\alpha_{k},n{-}\alpha_{k}{+}2,\ldots,n}_{\alpha_{k}}\mid \underbrace{a,a{+}1,\ldots,n{-}\alpha_{k}{-}1}_{n-p_{k}}.
	\end{displaymath}
	
	By construction, the join-irreducible permutation $w_{a,n}\in\Symmetric_{\alpha}(231)$ is contained in $X(u)$.  By Corollary~\ref{cor:alpha_irreducibles}, 
	\begin{displaymath}
		\Inv(w_{a,n}) = \bigl\{(a,a{+}1),(a,a{+}2),\ldots,(a,n)\bigr\};
	\end{displaymath}
	in particular $(a,p_{k})\in\Inv(w_{a,n})$.  However, we notice in the one-line notation of $u$ that $(a,p_{k})\notin\Inv(u)$, because $\alpha_{k}=p_{k}-p_{k-1}>1$.  It follows that $\Inv(w_{a,n})\not\subseteq\Inv(u)$ and therefore $w_{a,n}\notin\Psi_{\alpha}(u)$.
\end{proof}

We now relate the core label order of $\Tamari(\alpha)$ to the refinement order on $\NC(\alpha)$.  Given two partitions $\Pf_{1},\Pf_{2}\in\Pi_{\alpha}$, we say that $\Pf_{1}$ \defn{refines} $\Pf_{2}$ if every block of $\Pf_{1}$ is contained in some block of $\Pf_{2}$; we write $\Pf_{1}\refines\Pf_{2}$ in that case.

\begin{lemma}\label{lem:refinement_by_irreducibles}
	For $u,v\in\Symmetric_{\alpha}(231)$, $\BijNCAlign_{\alpha}(u)\refines\BijNCAlign_{\alpha}(v)$ if and only if $X(u)\subseteq X(v)$.
\end{lemma}
\begin{proof}
	Suppose that $\BijNCAlign_{\alpha}(u)\refines\BijNCAlign_{\alpha}(v)$ and pick $w_{a,b}\in X(u)$.  By definition, $a\sim_{\BijNCAlign_{\alpha}(u)}b$, and thus $a\sim_{\BijNCAlign_{\alpha}(v)}b$.  Hence, $w_{a,b}\in X(v)$.
	
	Conversely, suppose that $X(u)\subseteq X(v)$ and pick $a,b\in[n]$ with $a\sim_{\BijNCAlign_{\alpha}(u)}b$.  By definition, $w_{a,b}\in X(u)\subseteq X(v)$, and thus $a\sim_{\BijNCAlign_{\alpha}(v)}b$.  This implies $\BijNCAlign(u)\refines\BijNCAlign_{\alpha}(v)$.
\end{proof}

\begin{theorem}\label{thm:alpha_tamari_clo_nc_poset}
	Let $\alpha$ be a composition of $n$.  The core label order of $\Tamari(\alpha)$ is isomorphic to $\bigl(\NC(\alpha),\refines\bigr)$ if and only if $\alpha=(n)$ or $\alpha=(p,1,1,\ldots,q)$ for some integers $p,q>0$.
\end{theorem}
\begin{proof}
	Let $u,v\in\Symmetric_{\alpha}(231)$.  By Lemma~\ref{lem:refinement_by_irreducibles}, $\BijNCAlign_{\alpha}(u)\refines\BijNCAlign_{\alpha}(v)$ if and only if $X(u)\subseteq X(v)$.  By definition of the core label order (see Section~\ref{sec:core_label_order}), $u\sqsubseteq v$ if and only if $\Psi_{\alpha}(u)\subseteq\Psi_{\alpha}(v)$.  Now, Proposition~\ref{prop:core_label_set_is_noncrossing_partition} states that $X(u)=\Psi_{\alpha}(u)$ and $X(v)=\Psi_{\alpha}(v)$ if and only if $\alpha=(n)$ or $\alpha=(p,1,1,\ldots,1,q)$ for some integers $p,q>0$.
\end{proof}

Figure~\ref{fig:tamari_212} shows $\Tamari\bigl((2,1,2)\bigr)$, and Figure~\ref{fig:tamari_212_clo} shows $\CLO\bigl(\Tamari\bigl((2,1,2)\bigr)\bigr)$.  This illustrates Theorem~\ref{thm:alpha_tamari_clo_nc_poset}, since we can verify directly that $\CLO\bigl(\Tamari\bigl((2,1,2)\bigr)\bigr))$ is indeed isomorphic to $\bigl(\NC\bigl((2,1,2)\bigr),\refines\bigr)$.  

In contrast, Figure~\ref{fig:tamari_121_clo} shows $\CLO\bigl(\Tamari\bigl((1,2,1)\bigr)\bigr)$ and this poset is \emph{not} isomorphic to $\bigl(\NC\bigl((1,2,1)\bigr),\refines\bigr)$.  If $u=\oneline{4\mid 1\;2\mid 3}$ and $v=\oneline{3\mid 2\;4\mid 1}$, then $\BijNCAlign_{(1,2,1)}(u)\refines\BijNCAlign_{(1,2,1)}(v)$, but $u\not\sqsubseteq v$; see also Example~\ref{ex:strict_inclusion}.

\begin{figure}
	\centering
	\includegraphics[scale=1,page=21]{figures_aoco.pdf}
	\caption{The lattice $\Tamari\bigl((2,1,2)\bigr)$.}
	\label{fig:tamari_212}
\end{figure}

\begin{figure}
	\centering
	\includegraphics[scale=.8,page=22]{figures_aoco.pdf}
	\caption{The core label order of $\Tamari\bigl((2,1,2)\bigr)$.  This is also the poset $\bigl(\NC\bigl((2,1,2)\bigr),\refines\bigr)$.}
	\label{fig:tamari_212_clo}
\end{figure}

We conclude this section with the observation that the core label order of $\Tamari(\alpha)$ is always a meet-semilattice.  Recall the definition of the intersection property from Section~\ref{sec:core_label_order}.

\begin{theorem}\label{thm:alpha_tamari_intersection_property}
	For all $n>0$ and every composition $\alpha$ of $n$, $\Tamari(\alpha)$ has the intersection property.
\end{theorem}
\begin{proof}
	Let $w\in\Symmetric_{\alpha}$.  If we denote the core label set of $w$ in $\Weak(\Symmetric_{n})$ by $\Psi_{L;n}$ and the core label set of $w$ in $\Weak(\Symmetric_{\alpha})$ by $\Psi_{L;\alpha}$, then $\Psi_{L;n}(w)=\Psi_{L;\alpha}(w)$, because $\Weak(\Symmetric_{\alpha})$ is principal order ideal of $\Weak(\Symmetric_{n})$ by Theorem~\ref{thm:alpha_weak_order_lattice}.  
	
	For $j\in\JI\bigl(\Weak(\Symmetric_{n})\bigr)$, if $j\in\Psi_{L;\alpha}(w)$, then $j\leq_{L}w$ by Corollary~\ref{cor:minimal_cover_irreducible}.  This means that $j\in\JI\bigl(\Weak(\Symmetric_{\alpha})\bigr)$.  Thus, $\CLO\bigl(\Weak(\Symmetric_{\alpha})\bigr)$ is an order ideal of $\CLO\bigl(\Weak(\Symmetric_{n})\bigr)$.
	
	By \cite{reading11noncrossing}*{Proposition~5.1} (see also \cite{bancroft11shard}*{Section~4}), $\CLO\bigl(\Weak(\Symmetric_{n})\bigr)$ is a lattice, which means that $\CLO\bigl(\Weak(\Symmetric_{\alpha})\bigr)$ is a meet-semilattice.  Thus, by Theorem~\ref{thm:intersection_property}, $\Weak(\Symmetric_{\alpha})$ has the intersection property.  Now, Proposition~\ref{prop:intersection_property_quotients} implies that any quotient lattice of $\Weak(\Symmetric_{\alpha})$ has the intersection property.  By Theorem~\ref{thm:alpha_tamari_lattice}, this is the case for $\Tamari(\alpha)$.
\end{proof}

In a preliminary draft of this article, we claimed that the poset ${\bigl(\NC(\alpha),\refines\bigr)}$ is a meet-semilattice.  A referee has provided the following counterexample.  

\begin{example}\label{ex:alpha_refinement_no_semilattice}
	Let $\alpha=(2,4,3,1)$, and consider 
	\begin{align*}
		\Pf_{1} & = \bigl\{\{1,3,8,10\},\{2,6,9\},\{4\},\{5\},\{7\}\bigr\},\\
		\Pf_{2} & = \bigl\{\{1,4,7,10\},\{2,5,9\},\{3\},\{6\},\{8\}\bigr\}.
	\end{align*}
	Then, $\Pf_{1},\Pf_{2}\in\NC(\alpha)$, but their intersection is 
	\begin{displaymath}
		\Pf = \bigl\{\{1,10\},\{2,9\},\{3\},\{4\},\{5\},\{6\},\{7\},\{8\}\bigr\}\notin\NC(\alpha).
	\end{displaymath}
	Let $\mathbf{Q}_{1}=\BijNCAlign_{\alpha}(w_{1,10})$ and $\mathbf{Q}_{2}=\BijNCAlign_{\alpha}(w_{2,9})$.  Then $\mathbf{Q}_{1},\mathbf{Q}_{2}\in\NC(\alpha)$ and $\mathbf{Q}_{i}\refines\Pf_{j}$ for $i,j\in\{1,2\}$.  Thus, ${\bigl(\NC\bigl((2,4,3,1)\bigr),\refines\bigr)}$ is \emph{not} a meet-semilattice.
\end{example}

At the moment, we do not have anything meaningful to say about the posets ${\bigl(\NC(\alpha),\refines\bigr)}$, except for the cases in which they coincide with $\CLO\bigl(\Tamari(\alpha)\bigr)$.

\section{The $\alpha$-Tamari lattices are trim}
	\label{sec:alpha_tamari_trim}
In this section, we prove that $\Tamari(\alpha)$ is trim for every composition $\alpha$ of $n>0$.

\begin{proposition}\label{prop:alpha_tamari_trim}
	For all $n>0$ and every composition $\alpha$ of $n$, the lattice $\Tamari(\alpha)$ is trim.
\end{proposition}

We first study the join-irreducible elements of $\Tamari(\alpha)$ in greater detail.

\subsection{The poset of irreducibles of $\Tamari(\alpha)$}
	\label{sec:alpha_tamari_irreducibles}
Recall from Corollary~\ref{cor:arcs_join_irreducibles} that the join-irreducible elements of $\Tamari(\alpha)$ are in bijection with the $\alpha$-arcs.  Moreover, Corollary~\ref{cor:alpha_irreducibles} describes one-line notation and inversion sets of the join-irreducibles, and immediately implies the next result.

\begin{corollary}\label{cor:alpha_irreducibles_comparison}
	Let $w_{a,b},w_{a',b'}\in\JI\bigl(\Tamari(\alpha)\bigr)$.  Then, $w_{a,b}\leq_{L}w_{a',b'}$ if and only if $a$ and $a'$ belong to the same $\alpha$-region and $a'\leq a<b\leq b'$.
\end{corollary}

We may now describe the restriction of the weak order to the set $\JI\bigl(\Tamari(\alpha)\bigr)$.  See Figure~\ref{fig:alpha_irreducibles} for an illustration.

\begin{figure}
	\centering
	\begin{subfigure}[t]{.45\textwidth}
		\centering
		\includegraphics[scale=1,page=29]{figures_aoco.pdf}
		\caption{The set $\JI\bigl(\Tamari\bigl((1,2,1)\bigr)\bigr)$ ordered by weak order.}
		\label{fig:alpha_irreducibles_121}
	\end{subfigure}
	\hspace*{1cm}
	\begin{subfigure}[t]{.45\textwidth}
		\centering
		\includegraphics[scale=1,page=24]{figures_aoco.pdf}
		\caption{The set $\JI\bigl(\Tamari\bigl((2,1,2)\bigr)\bigr)$ ordered by weak order.}
		\label{fig:alpha_irreducibles_212}
	\end{subfigure}
	\caption{Two posets of join-irreducible permutations.}
	\label{fig:alpha_irreducibles}
\end{figure}

\begin{proof}[Proof of Theorem~\ref{thm:alpha_tamari_irreducibles_poset}]
	By Corollary~\ref{cor:alpha_irreducibles_comparison} we conclude that for $w_{a,b}\leq_{L} w_{a',b'}$ to hold, it is necessary that $a$ and $a'$ belong to the same $\alpha$-region.  This accounts for the $r-1$ connected components of $\Weak\bigl(\JI\bigl(\Tamari(\alpha)\bigr)\bigr)$, because $a$ can be chosen from any but the last $\alpha$-region and there is a total of $r$ $\alpha$-regions.
	
	Now suppose that $a$ lies in the $j\th$ $\alpha$-region, which means that $a$ takes any of the values $\{p_{j-1}{+}1,p_{j-1}{+}2,\ldots,p_{j}\}$.  For any choice of $a$, we can pick some $b\in\{p_{j}{+}1,p_{j}{+}2,\ldots,n\}$ to obtain a join-irreducible element $w_{a,b}$.  Observe that whenever $a\neq p_{j-1}{+}1$, then $w_{a,b}\leq_{L}w_{a-1,b}$, and we always have $w_{a,b}\leq_{L} w_{a,b+1}$ when $b<n$.  This implies that the $j\th$ component of $\Weak\bigl(\JI\bigl(\Tamari(\alpha)\bigr)\bigr)$ is isomorphic to the direct product of an $\alpha_{j}$-chain and an $(\alpha_{j+1}{+}\alpha_{j+2}{+}\cdots{+}\alpha_{r})$-chain.
\end{proof}

If $\alpha=(1,1,\ldots,1)$, then Theorem~\ref{thm:alpha_tamari_irreducibles_poset} states that the poset of irreducibles of the ordinary Tamari lattice is a union of $n-1$ chains of lengths $1,2,\ldots,n-1$, respectively.  This result was previously found in \cite{bennett94two}*{Theorem~11}.

\begin{corollary}\label{cor:alpha_irreducibles_number}
	Let $\alpha=(\alpha_{1},\alpha_{2},\ldots,\alpha_{r})$ be a composition of $n>0$.  Then,
	\begin{displaymath}
		\Bigl\lvert\JI\bigl(\Tamari(\alpha)\bigr)\Bigr\rvert = \sum_{j=1}^{r-1}\alpha_{j}\cdot\bigl(\alpha_{j+1}{+}\alpha_{j+2}{+}\cdots{+}\alpha_{r}\bigr).
	\end{displaymath}
\end{corollary}

We now show that $\Tamari(\alpha)$ is trim for every composition $\alpha$.  See Section~\ref{sec:trim_lattices} for the necessary definitions.

\begin{proposition}\label{prop:alpha_tamari_extremal}
	For all $n>0$ and every composition $\alpha$ of $n$, the lattice $\Tamari(\alpha)$ is extremal.
\end{proposition}
\begin{proof}
	Let
	\begin{equation}\label{eq:alpha_tamari_length}
		f(\alpha) \defs \sum_{j=1}^{r-1}\alpha_{j}\cdot\bigl(\alpha_{j+1}{+}\alpha_{j+2}{+}\cdots{+}\alpha_{r}\bigr).
	\end{equation}
	By Corollary~\ref{cor:alpha_tamari_semidistributive}, $\Tamari(\alpha)$ is semidistributive, which implies $\bigl\lvert\JI\bigl(\Tamari(\alpha)\bigr)\bigr\rvert=\bigl\lvert\MI\bigl(\Tamari(\alpha)\bigr)\bigr\rvert$ by Lemma~\ref{lem:kappa_bijection}.  By Corollary~\ref{cor:alpha_irreducibles_number}, $\bigl\lvert\JI\bigl(\Tamari(\alpha)\bigr)\bigr\rvert=f(\alpha)$.  By \eqref{eq:lattice_irreducibles_length}, it remains to exhibit a chain in $\Tamari(\alpha)$ consisting of $f(\alpha)+1$ elements.
	
	Let $\alpha=(\alpha_{1},\alpha_{2},\ldots,\alpha_{r})$.  We apply induction on $r$.  If $r=1$, then $\alpha=(n)$ and $\Symmetric_{\alpha}(231)=\{\id\}$.  Thus $\Tamari(\alpha)$ is the singleton lattice which is trivially trim.
	
	Now assume that the claim is true for all compositions of $n$ with at most $r-1$ parts, and recall that $p_{j}=\alpha_{1}+\alpha_{2}+\cdots+\alpha_{j}$ for $j\in[r]$.
	
	We set $v^{(0,0)}=\id$ and for $k\in[n{-}p_{1}]$ we define $v^{(0,k)}=s_{p_{1}+k-1}\circ v^{(0,k-1)}$.  This means that if the value of $v^{(0,k-1)}$ in position $p_{1}$ is $a$, then we move to $v^{(0,k)}$ by swapping the values $a$ and $a+1$.  Since we do this in order from left to right, $v^{(0,k-1)}\lessdot_{L}v^{(0,k)}$ and $v^{(0,k)}\in\Symmetric_{\alpha}(231)$ for all $k\in[n{-}p_{1}]$.  Then, $v^{(0,n-p_{1})}$ has the one-line notation
	\begin{displaymath}
		\oneline{1,2,\ldots,p_{1}{-}1,n\mid p_{1},p_{1}{+}2,\ldots,n{-}1}.
	\end{displaymath}
	(The vertical bar indicates the end of the first $\alpha$-region.)
	
	Now, for $i\in[p_{1}{-}1]$, we set $v^{(i,1)}=s_{p_{1}-i}\circ v^{(i-1,n-p_{1})}$ (which means that we swap the values $p_{1}{-}i$ and $p_{1}{-}i{+}1$), and for $k\in\{2,3,\ldots,n{-}p_{1}\}$ we set $v^{(i,k)}=s_{p_{1}-i+k-1}\circ v^{(i,k-1)}$.  As before, each of these elements is $(\alpha,231)$-avoiding.  Then, $v^{(p_{1}-1,n-p_{1})}$ has the one-line notation
	\begin{displaymath}
		\oneline{n{-}p_{1}{+}1,n{-}p_{1}{+}2,\ldots,n\mid 1,2,\ldots,n-p_{1}}.
	\end{displaymath}
	
	This constitutes a chain of length $p_{1}\cdot(n-p_{1})$ from $\id$ to $v^{(p_{1}-1,n-p_{1})}$ in $\Tamari(\alpha)$.  The interval $\bigl[v^{(p_{1}-1,n-p_{1})},\woa\bigr]$ in $\Tamari(\alpha)$ is isomorphic to $\Tamari\bigl((\alpha_{2},\ldots,\alpha_{r})\bigr)$, which by induction has length $f\bigl((\alpha_{2},\ldots,\alpha_{r})\bigr)$.  It follows that 
	\begin{align*}
		\ell\bigl(\Tamari(\alpha)\bigr) & = p_{1}\cdot(n-p_{1})+\sum_{j=2}^{r-1}\alpha_{j}\cdot(\alpha_{j+1}+\alpha_{j+2}+\cdots+\alpha_{r})\\
		& = \alpha_{1}\cdot(\alpha_{2}+\alpha_{3}+\cdots+\alpha_{r})+\sum_{j=2}^{r-1}\alpha_{j}\cdot(\alpha_{j+1}+\alpha_{j+2}+\cdots+\alpha_{r})\\
		& = \sum_{j=1}^{r-1}\alpha_{j}\cdot(\alpha_{j+1}+\alpha_{j+2}+\cdots+\alpha_{r})\\
		& = f(\alpha).
	\end{align*}
	Hence, $\Tamari(\alpha)$ is extremal.
\end{proof}

For $\alpha=(2,1,2)$, the maximal chain constructed in the proof of Proposition~\ref{prop:alpha_tamari_extremal} is highlighted in Figure~\ref{fig:tamari_212}.  We may now conclude the proof of Proposition~\ref{prop:alpha_tamari_trim}.

\begin{proof}[Proof of Proposition~\ref{prop:alpha_tamari_trim}]
	By Corollary~\ref{cor:alpha_tamari_semidistributive}, $\Tamari(\alpha)$ is semidistributive, and by Proposition~\ref{prop:alpha_tamari_extremal}, $\Tamari(\alpha)$ is extremal.  Then, Theorem~\ref{thm:semidistributive_extremal_trim} implies that $\Tamari(\alpha)$ is trim.
\end{proof}

\begin{corollary}\label{cor:alpha_irreducibles_order}
	Let $C$ be the maximal chain constructed in the proof of Proposition~\ref{prop:alpha_tamari_extremal}, and let $\lambda$ be the labeling from \eqref{eq:cu_labeling}.  The labels appearing on $C$ are pairwise distinct and they induce a total order on $\JI\bigl(\Tamari(\alpha)\bigr)$ given by the following cover relations:
	\begin{displaymath}
		w_{a,b} \prec \begin{cases}
			w_{a,b+1}, & \text{if}\; p_{j}+1\leq b<n,\\ 
			w_{a-1,p_{j}+1}, & \text{if}\;a\neq p_{j-1}+1\;\text{and}\;b=n,\\
			w_{p_{j+1},p_{j+1}+1}, & \text{if}\;a=p_{j-1}+1\;\text{and}\;b=n,\\
		\end{cases}
	\end{displaymath}
	if $a$ belongs to the $j\th$ $\alpha$-region.
\end{corollary}
\begin{proof}
	With the notation from the proof of Proposition~\ref{prop:alpha_tamari_extremal}, the first $p_{1}\cdot (n-p_{1})$ cover relations along $C$ are:
	\begin{multline*}
		\id = v^{(0,0)} \lessdot_{\alpha} v^{(0,1)} \lessdot_{\alpha} \cdots \lessdot_{\alpha} v^{(0,n-p_{1})} \lessdot_{\alpha} v^{(1,1)} \lessdot_{\alpha} \cdots\\
			\lessdot_{\alpha} v^{(1,n-p_{1})} \lessdot_{\alpha} \cdots \lessdot_{\alpha} v^{(2,1)} \lessdot_{\alpha} \cdots \lessdot_{\alpha} v^{(p_{1}-1,n-p_{1})}.
	\end{multline*}
	By construction, $C$ is also a maximal chain in $\Weak(\Symmetric_{\alpha})$ and it follows that 
	\begin{displaymath}
		\lambda\Bigl(v^{(i,k)},v^{(i,k+1)}\Bigr) = \begin{cases}
			w_{p_{1},p_{1}+1}, & \text{if}\;i=k=0,\\ 
			w_{p_{1}-i-1,p_{1}+1}, & \text{if}\;0\leq i<p_{1}-1,k=n-p_{1},\\
			w_{p_{1}-i,p_{1}+k} & \text{if}\;0\leq i\leq p_{1}-1,0<k<n-p_{1}.
		\end{cases}
	\end{displaymath}
	(If $k=n-p_{1}$, then we set $k+1=1$.)  The claim follows by induction.
\end{proof}

\begin{example}
	Let $\alpha=(2,1,2)$.  The chain constructed in the proof of Proposition~\ref{prop:alpha_tamari_extremal} is highlighted in Figure~\ref{fig:tamari_212}.  The total order of the join-irreducibles of $\Tamari\bigl((2,1,2)\bigr)$ is
	\begin{displaymath}
		w_{2,3}\;\prec\; w_{2,4}\;\prec\; w_{2,5}\;\prec\; w_{1,3}\;\prec\; w_{1,4}\;\prec\; w_{1,5}\;\prec\; w_{3,4}\;\prec\; w_{3,5}.
	\end{displaymath}
\end{example}

\begin{remark}
	If $\alpha=(1,1,\ldots,1)$ is a composition of $n$, then the join-irreducibles of $\Tamari\bigl((1,1,\ldots,1)\bigr)=\Tamari(n)$ correspond to all transpositions $(a,b)$ for $1\leq a<b\leq n$.  The total order defined in Corollary~\ref{cor:alpha_irreducibles_order} corresponds to the lexicographic order on these transpositions.
	
	This order corresponds to the so-called \emph{inversion order} of the longest element $\wo\in\Symmetric_{n}$ with respect to the linear Coxeter element.  It seems that this correspondence works in general, \ie the order defined in Corollary~\ref{cor:alpha_irreducibles_order} recovers the inversion order of the parabolic longest element $\woa\in\Symmetric_{\alpha}$ with respect to the linear Coxeter element.
\end{remark}

We conclude this section with the proof of Theorem~\ref{thm:alpha_tamari_structure}.

\begin{proof}[Proof of Theorem~\ref{thm:alpha_tamari_structure}]
	$\Tamari(\alpha)$ is congruence uniform by Proposition~\ref{prop:alpha_tamari_congruence_uniform} and trim by Proposition~\ref{prop:alpha_tamari_trim}.
\end{proof}

\subsection{The Galois graph of $\Tamari(\alpha)$}
	\label{sec:alpha_tamari_galois}
By Proposition~\ref{prop:alpha_tamari_extremal}, $\Tamari(\alpha)$ is an extremal lattice.  Any extremal lattice can be described in terms of a directed graph; its Galois graph, see Section~\ref{sec:galois_graph}.  

In this section, we give an explicit description of the Galois graph of $\Tamari(\alpha)$.  We exploit the fact from Proposition~\ref{prop:alpha_tamari_congruence_uniform} that $\Tamari(\alpha)$ is also congruence uniform.  

Let us recall the following useful characterization of inversion sets of joins in the weak order.

\begin{lemma}[\cite{markowsky94permutation}*{Theorem~1(b)}]\label{lem:inversion_set_join}
	Let $u,v\in\Symmetric_{n}$.  The inversion set $\Inv(u\vee_{L}v)$ is the transitive closure of $\Inv(u)\cup\Inv(v)$, \ie if $(a,b),(b,c)\in\Inv(u)\cup\Inv(v)$, then $(a,c)\in\Inv(u\vee v)$. 
\end{lemma}

\begin{proof}[Proof of Theorem~\ref{thm:alpha_tamari_galois_graph}]
	By definition, the vertex set of $\Galois\bigl(\Tamari(\alpha)\bigr)$ is $[K]$, where
	\begin{displaymath}
		K = \bigl\lvert\JI\bigl(\Tamari(\alpha)\bigr)\bigr\rvert = f(\alpha)
	\end{displaymath}
	and $f(\alpha)$ is defined in \eqref{eq:alpha_tamari_length}.  There exists a directed edge $s\to t$ in $\Galois\bigl(\Tamari(\alpha)\bigr)$ if $s\neq t$ and $j_{s}\not\leq m_{t}$, where the join- and meet-irreducible elements of $\Tamari(\alpha)$ are ordered as in \eqref{eq:extremal_order}.  By Proposition~\ref{prop:alpha_tamari_congruence_uniform}, $\Tamari(\alpha)$ is also congruence uniform, so that Corollary~\ref{cor:kappa_extremal_order}(ii) implies $s\to t$ if and only if $s\neq t$ and $j_{t}\leq {j_{t}}_{*}\vee j_{s}$.  We may thus view $\Galois\bigl(\Tamari(\alpha)\bigr)$ as a directed graph on the vertex set $\JI\bigl(\Tamari(\alpha)\bigr)$.  
	
	Now, pick $w_{a,b},w_{a',b'}\in\JI\bigl(\Tamari(\alpha)\bigr)$ such that $w_{a,b}\neq w_{a',b'}$ and $a$ belongs to the $i\th$ $\alpha$-region and $a'$ belongs to the ${i'}\th$ $\alpha$-region.  We need to characterize when
	\begin{equation}\label{eq:target}
		w_{a',b'} \leq {w_{a',b'}}_{*}\vee_{\alpha} w_{a,b}.
	\end{equation}
	For simplicity, let us write $w=w_{a,b}$, $w'=w_{a',b'}$ and $w'_{*}={w_{a',b'}}_{*}$.  Let $z=w'_{*}\vee_{\alpha}w$.  By definition, $\Inv(w'_{*})\cup\Inv(w)\subseteq\Inv(z)$.  By Corollary~\ref{cor:alpha_irreducibles}, $\Inv(w'_{*})=\Inv(w')\setminus\bigl\{(a',b')\bigr\}$.  Then, \eqref{eq:target} is satisfied if and only if $(a',b')\in\Inv(z)$, which by Lemma~\ref{lem:inversion_set_join} is the case if $(a',b')\in\Inv(w)$ or there exists $c\in\{a'{+}1,a'{+}2,\ldots,b'{-}1\}$ such that $(a',c)\in\Inv(w'_{*})$ and $(c,b')\in\Inv(w)$ or vice versa.
	
	Let us first consider the case where $a$ and $a'$ belong to the same $\alpha$-region, \ie $i=i'$.  There are two cases.
	
	(i) Let $a\leq a'$.  If $b'\leq b$, then Corollary~\ref{cor:alpha_irreducibles_comparison} implies $w'\leq_{L}w$ and \eqref{eq:target} holds. If $b<b'$, then by Corollary~\ref{cor:alpha_irreducibles}, $(a',b')\notin\Inv(w)$.  In fact, $(c,b')\notin\Inv(w)$ for any $c\in[n]$, and if $(a',c)\in\Inv(w)$, then $p_{i}+1\leq c\leq b$.  However, if $(c,b')\in\Inv(w'_{*})$, then $a'+1\leq c\leq p_{i}$.  Thus, $(a',b')\notin\Inv(z)$ so that \eqref{eq:target} is not satisfied.
	
	(ii) Let $a>a'$.  If $b\leq b'$, then Corollary~\ref{cor:alpha_irreducibles_comparison} implies $w<_L w'$, so that \eqref{eq:target} does not hold.  If $b>b'$, then by Corollary~\ref{cor:alpha_irreducibles}, $(a',b')\notin\Inv(w)$.  Again, $(a',c)\notin\Inv(w)$ for any $c\in[n]$, and if $(c,b')\in\Inv(w)$, then $a\leq c\leq p_{i}$.  However, if $(a',c)\in\Inv(w'_{*})$, then $p_{i}+1\leq c<b'$.  Thus, $(a',b')\notin\Inv(z)$ so that \eqref{eq:target} is not satisfied.
	
	Let us now consider the case where $a$ and $a'$ belong to different $\alpha$-regions, \ie $i\neq i'$.  By Corollary~\ref{cor:alpha_irreducibles}, $(a',b')\notin\Inv(w)$.  As before, we may actually conclude $(a',c)\notin\Inv(w)$ for all $c\in[n]$, and if $(c,b')\in\Inv(w)$, then $a\leq c\leq p_{i}$ and $p_{i}<b'\leq b$.  If $(a',c)\in\Inv(w'_{*})$, then $p_{i'}+1\leq c<b'$.  
	
	(i) If $i<i'$, then $p_{i}<p_{i'}+1$.  Thus, $(a',b')\notin\Inv(z)$ so that \eqref{eq:target} is not satisfied.
	
	(ii) If $i>i'$, then $a'<a$.  If $b'\leq p_{i}$, then $(c,b')\notin\Inv(w)$ for any $c\in[n]$ and \eqref{eq:target} cannot be satisfied.  If $p_{i}<b'$, then we may choose $c=a$ to see that $(a',b')\in\Inv(z)$ which implies \eqref{eq:target}.
\end{proof}

Figure~\ref{fig:tamari_galois} shows $\Galois\bigl(\Tamari\bigl((1,2,1)\bigr)\bigr)$ and $\Galois\bigl(\Tamari\bigl((2,1,2)\bigr)\bigr)$.  In \cite{thomas19rowmotion}*{Theorem~5.5} it was shown that the complement of the \emph{undirected} Galois graph of an extremal semidistributive lattice is precisely the $1$-skeleton of the canonical join complex.  By Proposition~\ref{prop:alpha_join_representations} the canonical join representations in $\Tamari(\alpha)$ correspond to noncrossing $\alpha$-partitions.  We thus have the following corollary (which may also be verified directly).

\begin{corollary}\label{cor:alpha_galois_edges_compatible}
	If there exists a directed edge $w_{a,b}\to w_{a',b'}$ in $\Galois\bigl(\Tamari(\alpha)\bigr)$, then the $\alpha$-arcs $(a,b)$ and $(a',b')$ are not compatible.
\end{corollary}

\begin{figure}
	\centering
	\begin{subfigure}[t]{.45\textwidth}
		\centering
		\includegraphics[scale=1,page=30]{figures_aoco.pdf}
		\caption{The Galois graph of $\Tamari\bigl((1,2,1)\bigr)$.}
		\label{fig:tamari_121_galois}
	\end{subfigure}
	\hspace*{1cm}
	\begin{subfigure}[t]{.45\textwidth}
		\centering
		\includegraphics[scale=1,page=25]{figures_aoco.pdf}
		\caption{The Galois graph of $\Tamari\bigl((2,1,2)\bigr)$.}
		\label{fig:tamari_212_galois}
	\end{subfigure}
	\caption{Galois graphs of two parabolic Tamari lattices.}
	\label{fig:tamari_galois}
\end{figure}

\subsection{The topology of $\Tamari(\alpha)$}
	\label{sec:alpha_tamari_topology}
We conclude our study of $\Tamari(\alpha)$ with a topological characterization.  See Section~\ref{sec:poset_topology} for the necessary definitions.

\begin{theorem}\label{thm:alpha_tamari_topology}
	Let $n>0$ and let $\alpha$ be a composition of $n$.  Then, $\Tamari(\alpha)$ is spherical if and only if $\alpha=(n)$ or $\alpha=(1,1,\ldots,1)$.
\end{theorem}
\begin{proof}
	By Proposition~\ref{prop:alpha_tamari_trim}, $\Tamari(\alpha)$ is trim and therefore left-modular.  By Theorem~\ref{thm:left_modular_topology}, the order complex of the proper part of $\Tamari(\alpha)$ is a wedge of $k$ spheres, where $k=\bigl\lvert\mu\bigl(\Tamari(\alpha)\bigr)\bigr\rvert$.  By Corollary~\ref{cor:alpha_tamari_semidistributive}, $\Tamari(\alpha)$ is semidistributive, so that by Proposition~\ref{prop:meet_semidistributive_spherical}, $\mu\bigl(\Tamari(\alpha)\bigr)\in\{-1,0,1\}$.  
	
	If $\alpha=(n)$, then $\Symmetric_{\alpha}(231)=\{\id\}$, and $\Tamari(\alpha)$ is thus the singleton lattice, so that $\mu\bigl(\Tamari(\alpha)\bigr)=1$.  Otherwise, let $\alpha=(\alpha_{1},\alpha_{2},\ldots,\alpha_{r})$ with $r>1$.  Let $A$ denote the set of atoms of $\Tamari(\alpha)$, and let $B$ denote the canonical join representation of $\woa$.  By construction, 
	\begin{align*}
		A & = \Bigl\{w_{p_{1},p_{1}+1},w_{p_{2},p_{2}+1},\ldots,w_{p_{r-1},p_{r-1}+1}\Bigr\},\\
		B & = \Bigl\{w_{1,p_{2}},w_{p_{1}+1,p_{3}},\ldots,w_{p_{r-2}+1,n}\Bigr\}.
	\end{align*}
	Then, $\mu\bigl(\Tamari(\alpha)\bigr)=(-1)^{n}$ if and only if the join of all atoms of $\Tamari(\alpha)$ is $\woa$ if and only if $A=B$ if and only if $r=n$, $p_{1}=1$ and $p_{i+1}=p_{i}+1$ for $i\in[r{-}1]$ if and only if $\alpha=(1,1,\ldots,1)$.  
\end{proof}

We conclude the proof of Theorem~\ref{thm:alpha_tamari_core_label_order}.

\begin{proof}[Proof of Theorem~\ref{thm:alpha_tamari_core_label_order}]
	By Theorem~\ref{thm:alpha_tamari_intersection_property}, $\Tamari(\alpha)$ has the intersection property, which---by Theorem~\ref{thm:intersection_property}---implies that $\CLO\bigl(\Tamari(\alpha)\bigr)$ is a meet-semilattice.  Clearly, a meet-semilattice is a lattice if and only if it has a greatest element.  By \cite{muehle19the}*{Lemma~4.6}, the core label order of a congruence-uniform lattice $\Lattice$ has a greatest element if and only if $\Lattice$ is spherical.  By Theorem~\ref{thm:alpha_tamari_topology}, $\Tamari(\alpha)$ is spherical if and only if $\alpha=(n)$ or $\alpha=(1,1,\ldots,1)$.
\end{proof}

\section{Parabolic Chapoton triangles}
	\label{sec:alpha_chapoton}
We end our study of the $\alpha$-Tamari lattices with an enumerative observation.  Let us consider the \defn{$M_{\alpha}$-triangle}, \ie the (bivariate) generating function of the M{\"o}bius function of $\CLO\bigl(\Tamari(\alpha)\bigr)$ with respect to the number of descents:
\begin{equation}\label{eq:m_triangle}
	M_{\alpha}(x,y) \defs \sum_{u,v\in\Symmetric_{\alpha}(231)}\mu_{\CLO(\Tamari(\alpha))}(u,v)x^{\lvert\Des(u)\rvert}y^{\lvert\Des(v)\rvert}.
\end{equation}

\begin{example}\label{ex:triangles_1}
	The core label orders of $\Tamari\bigl((1,2,1)\bigr)$ and $\Tamari\bigl((2,1,2)\bigr)$ are shown in Figures~\ref{fig:tamari_121_clo} and \ref{fig:tamari_212_clo}, respectively.  We may compute the corresponding $M_{\alpha}$-triangles directly:
	\begin{align*}
		M_{(1,2,1)}(x,y) & =  4x^{2}y^{2} - 9xy^{2} + 5y^{2} + 5xy - 5y + 1,\\
		M_{(2,1,2)}(x,y) & = x^{3}y^{3} - 4x^{2}y^{3} + 5xy^{3} - 2y^{3} + 9x^{2}y^{2} - 22xy^{2} + 13y^{2} + 8xy - 8y + 1.
	\end{align*}
\end{example}

The motivation for the consideration of the $M_{\alpha}$-triangle comes from \cite{chapoton04enumerative}, where the corresponding polynomial for $\alpha=(1,1,\ldots,1)$ was introduced.  More precisely, F.~Chapoton considered the generating function of the M{\"o}bius function of the noncrossing partition lattice with respect to the number of bumps.  In view of Theorems~\ref{thm:nc_231_bijection} and \ref{thm:alpha_tamari_clo_nc_poset}, Chapoton's $M$-triangle agrees with our $M_{(1,1,\ldots,1)}$-triangle.

One of Chapoton's central observations in \cites{chapoton04enumerative,chapoton06sur} is the fact that $M_{(1,1,\ldots,1)}(x,y)$ behaves extremely well under certain (invertible) variable substitutions.  If $\alpha=(\alpha_{1},\alpha_{2},\ldots,\alpha_{r})$, then we define the \defn{$H_{\alpha}$}- and the \defn{$F_{\alpha}$-triangle} as follows:
\begin{align}
	H_{\alpha}(x,y) & \defs \bigl(x(y-1)+1\bigr)^{r-1}M_{\alpha}\left(\frac{y}{y-1},\frac{x(y-1)}{x(y-1)+1}\right),\label{eq:h_triangle}\\
	F_{\alpha}(x,y) & \defs y^{r-1}M_{\alpha}\left(\frac{y+1}{y-x},\frac{y-x}{y}\right)\label{eq:f_triangle}.
\end{align}

\begin{example}\label{ex:triangles_2}
	Continuing Example~\ref{ex:triangles_1}, we obtain
	\begin{align*}
		H_{(1,2,1)}(x,y) & = x^{2}y^{2} + 2x^{2}y + x^{2} + 2xy + 3x + 1,\\
		H_{(2,1,2)}(x,y) & = \frac{x^{3}y^{3} + x^{3}y^{2} + 3x^{3}y + 3x^{2}y^{2} - 4x^{3} + 6x^{2}y + 3xy + 5x + 1}{x(y-1)+1},
	\end{align*}
	and 
	\begin{align*}
		F_{(1,2,1)}(x,y) & = 5x^{2} + 4xy + y^{2} + 9x + 4y + 4,\\
		F_{(2,1,2)}(x,y) & = \frac{2x^{3} + 12x^{2}y + 4xy^{2} + y^{3} + 5x^{2} + 20xy + 4y^{2} + 4x + 8y + 1}{y}.
	\end{align*}
\end{example}

Computer experiments suggest the following conjecture.

\begin{conjecture}\label{conj:hf_triangles}
	Let $n>0$ and let $\alpha$ be a composition of $n$ into $r$ parts.  The rational functions $H_{\alpha}(x,y)$ and $F_{\alpha}(x,y)$ are polynomials with nonnegative integer coefficients if and only if $\alpha$ has at most one part exceeding $1$.
\end{conjecture}

If we replace the exponent $r-1$ in the definition of $H_{\alpha}(x,y)$ and $F_{\alpha}(x,y)$ by 
\begin{displaymath}
	d = \max_{w\in\Symmetric_{\alpha}(231)}\bigl\lvert\Des(w)\bigr\rvert,
\end{displaymath}
then we can verify directly that \eqref{eq:h_triangle} and \eqref{eq:f_triangle} produce polynomials with integer coefficients; however these coefficients need not all be nonnegative, as can be witnessed in the example $\alpha=(2,1,2)$.

If $\alpha$ has at most one part exceeding $1$, then Conjecture~\ref{conj:hf_triangles} implies the existence of combinatorial families $A_{H;\alpha}$ and $A_{F;\alpha}$, and combinatorial statistics $\sigma_{1}$, $\sigma_{2}$, $\tau_{1}$, $\tau_{2}$, such that
\begin{align*}
	H_{\alpha}(x,y) & = \sum_{a\in A_{H;\alpha}}x^{\sigma_{1}(a)}y^{\sigma_{2}(a)},\\
	F_{\alpha}(x,y) & = \sum_{a\in A_{F;\alpha}}x^{\tau_{1}(a)}y^{\tau_{2}(a)}.
\end{align*}

We close by suggesting candidates for $A_{H;\alpha}$ and $\sigma_{1},\sigma_{2}$.  For $n>0$, we define
\begin{align*}
	S_{n} \defs \bigl\{(i,i+1)\mid 1\leq i<n\bigr\},\\
	T_{n} \defs \bigl\{(i,j)\mid 1\leq i<j\leq n\bigr\},
\end{align*}
and we set $(i_{1},j_{1})\trianglelefteq (i_{2},j_{2})$ if and only if $i_{1}\geq i_{2}$ and $j_{1}\leq j_{2}$.

Let $\alpha=(\alpha_{1},\alpha_{2},\ldots,\alpha_{r})$ be a composition of $n$, and recall that $p_{i}=\alpha_{1}+\alpha_{2}+\cdots+\alpha_{i}$ for $i\in[r]$.  We now define
\begin{align*}
	S_{\alpha} & \defs \bigl\{(p_{i},p_{i}+1)\mid i\in[r-1]\bigr\},\\
	T_{\alpha} & \defs \bigl\{t\in T_{n}\mid s\trianglelefteq t\;\text{for some}\;s\in S_{\alpha}\}.
\end{align*}
In other words, $T_{\alpha}$ is the order filter generated by $S_{\alpha}$ in the poset $(T_{n},\trianglelefteq)$.  Let
\begin{displaymath}
	\NN(\alpha) \defs \bigl\{A\subseteq T_{\alpha}\mid A\;\text{is an antichain in}\;(T_{\alpha},\trianglelefteq)\bigr\}
\end{displaymath}
be the set of \defn{nonnesting $\alpha$-partitions}.  Figure~\ref{fig:root_poset_32122} shows a nonnesting $(3,2,1,2,2)$-partition.

\begin{figure}
	\centering
	\includegraphics[scale=.75,page=26]{figures_aoco.pdf}
	\caption{The poset $\bigl(T_{(3,2,1,2,2)},\trianglelefteq\bigr)$ with a nonnesting $(3,2,1,2,2)$-partition highlighted.}
	\label{fig:root_poset_32122}
\end{figure}

%
\begin{conjecture}\label{conj:h_triangle_combin}
	Let $n>0$, let $\alpha$ be a composition of $n$ and define 
	\begin{displaymath}
		\tilde{H}_{\alpha}(x,y) \defs \sum_{A\in\NN(\alpha)}x^{\lvert A\rvert}y^{\lvert A\cap S_{\alpha}\rvert}.
	\end{displaymath}
	Then, $H_{\alpha}(x,y)=\tilde{H}_{\alpha}(x,y)$ if and only if $\alpha$ has at most one part exceeding $1$.
\end{conjecture}

For $\alpha=(1,1,\ldots,1)$, Conjecture~\ref{conj:h_triangle_combin} follows from \cite{thiel14on}*{Theorem~2} and \cite{athanasiadis07on}*{Theorem~1.1}.  

\begin{example}\label{ex:triangles_3}
	We continue Examples~\ref{ex:triangles_1} and \ref{ex:triangles_2}.  Figures~\ref{fig:nonnesting_121} and \ref{fig:nonnesting_212} show the nonnesting $\alpha$-partitions for $\alpha=(1,2,1)$ and $\alpha=(2,1,2)$, respectively.  Whenever minimal elements are involved in an antichain, they are marked in red.  We obtain 
	\begin{align*}
		\tilde{H}_{(1,2,1)}(x,y) & = x^{2}y^{2} + 2x^{2}y + x^{2} + 2xy + 3x + 1,\\
		\tilde{H}_{(2,1,2)}(x,y) & = x^{2}y^{2} + x^{3} + 2x^{2}y + 6x^{2} + 2xy + 6x + 1,
	\end{align*}
	which supports Conjecture~\ref{conj:h_triangle_combin}.
\end{example}

\begin{figure}
	\centering
	\includegraphics[scale=1,page=27]{figures_aoco.pdf}
	\caption{The nonnesting $(1,2,1)$-partitions.  Minimal elements are circled in red, and each time the term contributed to $\tilde{H}(x,y)_{(1,2,1)}$ is indicated.}
	\label{fig:nonnesting_121}
\end{figure}

\begin{figure}
	\centering
	\includegraphics[scale=1,page=28]{figures_aoco.pdf}
	\caption{The nonnesting $(2,1,2)$-partitions.  Minimal elements are circled in red, and each time the term contributed to $\tilde{H}(x,y)_{(2,1,2)}$ is indicated.}
	\label{fig:nonnesting_212}
\end{figure}

\begin{remark}
	Apart from \cites{chapoton04enumerative,chapoton06sur}, analogues of the polynomials defined in \eqref{eq:m_triangle}--\eqref{eq:f_triangle} have appeared (in different contexts) in \cite{armstrong09generalized}*{Section~5.3} and \cite{garver17enumerative}*{Section~6}.
\end{remark}

\section*{Acknowledgements}

I thank Robin Sulzgruber for raising my interest in Chapoton triangles, and I am indebted to Christian Krattenthaler for his invaluable advice.

Moreover, I would like to express my gratitude towards an anonymous referee, who suggested that I narrow the focus of the paper to $\alpha$-Tamari lattices, provided the counterexample in Example~\ref{ex:alpha_refinement_no_semilattice} and provided simplifications to the proofs of Propositions~\ref{prop:core_label_set_is_noncrossing_partition} and \ref{prop:alpha_tamari_extremal}.

\appendix

\section{Posets and lattices}
	\label{sec:posets_lattices}
\subsection{Basic notions}
	\label{sec:order_basics}
Let $\Poset=(P,\leq)$ be a partially ordered set, or \defn{poset} for short.  The \defn{dual} poset of $\Poset$ is $\Poset^{\dual}\defs(P,\geq)$.  In the remainder, we only consider finite posets.

Given $a,b\in P$ with $a\leq b$, the set $[a,b]\defs\{c\in P\mid a\leq c\leq b\}$ is an \defn{interval} of $\Poset$.  Two elements $a,b\in P$ form a \defn{cover relation} if $a<b$ and $[a,b]$ consists of two elements.  In that case, we usually write $a\lessdot b$, and we say that $a$ \defn{is covered by} $b$ and that $b$ \defn{covers} $a$.  The set of cover relations of $\Poset$ is denoted by $\Covers(\Poset)$.

An element $a\in P$ is \defn{minimal} (resp. \defn{maximal}) in $\Poset$ if $b\leq a$ (resp. $a\leq b$) implies $b=a$ for all $b\in P$.  If $\Poset$ has a unique minimal element (usually denoted by $\least$) and a unique maximal element (usually denoted by $\grtst$), then $\Poset$ is \defn{bounded}.  In a bounded poset, any element covering $\least$ (resp. covered by $\grtst$) is an \defn{atom} (resp. \defn{coatom}).  

A \defn{chain} (resp. \defn{antichain}) of $\Poset$ is a subset of $P$ in which every two distinct elements are comparable (resp. incomparable).  A chain consisting of $k$ elements is also called a $k$-chain.  A chain is \defn{saturated} if it can be written as a sequence of cover relations, and it is \defn{maximal} if it is saturated and contains a minimal and a maximal element.

When $M$ is a set, then a map $f\colon\Covers(\Poset)\to M$ is an \defn{edge-labeling} of $\Poset$.  If $C:a_{0}\lessdot a_{1}\lessdot\cdots\lessdot a_{s}$ is a saturated chain, then we write
\begin{displaymath}
	f(C) \defs \bigl(f(a_{0},a_{1}),f(a_{1},a_{2}),\ldots,f(a_{s-1},a_{s})\bigr).
\end{displaymath}


An \defn{order ideal} (resp. \defn{order filter}) of $\Poset$ is a subset $I\subseteq P$ such that for all $a\in I$ if $b\leq a$ (resp. $a\leq b$) then $b\in I$.  An order ideal (resp. order filter) is \defn{principal} if it has a unique maximal (resp. minimal) element.

If for all $a,b\in P$ there exists a least upper bound $a\vee b$ (resp. a greatest lower bound $a\wedge b$), then $\Poset$ is a \defn{join-semilattice} (resp. \defn{meet-semilattice}).  If it exists, then $a\vee b$ (resp. $a\wedge b$) is the \defn{join} (resp. the \defn{meet}) of $a$ and $b$.  A poset that is both a join- and a meet-semilattice is a \defn{lattice}.  Every finite lattice is a bounded poset.

\subsection{Congruence-uniform lattices}
	\label{sec:congruence_uniform_lattices}
Let $\Lattice=(L,\leq)$ be a finite lattice.  A \defn{lattice congruence} is an equivalence relation $\Theta$ on $L$ such that for all $a,b,c,d\in L$ if $[a]_{\Theta}=[c]_{\Theta}$ and $[b]_{\Theta}=[d]_{\Theta}$, then $[a\vee b]_{\Theta}=[c\vee d]_{\Theta}$ and $[a\wedge b]_{\Theta}=[c\wedge d]_{\Theta}$.  The set $\Con(\Lattice)$ of all lattice congruences on $\Lattice$ forms a distributive lattice under refinement~\cite{funayama42on}.  If $a\lessdot b$ in $\Lattice$, then we denote by $\cg(a,b)$ the finest lattice congruence on $\Lattice$ in which $a$ and $b$ are equivalent.

An element $j\in L\setminus\{\least\}$ is \defn{join irreducible} if whenever $j=a\vee b$, then $j\in\{a,b\}$.  \defn{Meet-irreducible} elements can be defined dually.  Let $\JI(\Lattice)$ (resp. $\MI(\Lattice)$) denote the set of join-irreducible (resp. meet-irreducible) elements of $\Lattice$.  If $\Lattice$ is finite and $j\in\JI(\Lattice)$ (resp. $m\in\MI(\Lattice)$), then there exists  a unique element $j_{*}\in L$ (resp. $m^{*}\in L$) such that $j_{*}\lessdot j$ (resp. $m\lessdot m^{*}$).  If $j\in\JI(\Lattice)$, then $\cg(j)\defs\cg(j_{*},j)$.

\begin{theorem}[\cite{freese95free}*{Theorem~2.30}]\label{thm:irreducible_congruences}
	Let $\Lattice$ be a finite lattice and let $\Theta\in\Con(\Lattice)$.  The following are equivalent.
	\begin{enumerate}[\rm (i)]
		\item $\Theta$ is join-irreducible in $\Con(\Lattice)$.
		\item $\Theta=\cg(a,b)$ for some $(a,b)\in\Covers(\Lattice)$.
		\item $\Theta=\cg(j)$ for some $j\in\JI(\Lattice)$.
	\end{enumerate}
\end{theorem}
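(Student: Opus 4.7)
The plan is to prove the three equivalences by showing $(1)\Rightarrow(2)\Rightarrow(3)\Rightarrow(1)$, using throughout the fact that $\Con(\Lattice)$ is a finite distributive lattice. The main driver will be the description of the join in $\Con(\Lattice)$ as a transitive closure, combined with the observation that the quotients $a\lessdot b$ and $j_{*}\lessdot j$ can be linked by elementary perspectivities.

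For $(1)\Rightarrow(2)$, I would first argue that every $\Theta\in\Con(\Lattice)$ is the join (in $\Con(\Lattice)$) of the principal congruences $\cg(a,b)$ ranging over all cover relations $a\lessdot b$ with $a\equiv_{\Theta}b$. This reduces to the standard fact that two elements $x<y$ are $\Theta$-equivalent if and only if every cover $x'\lessdot y'$ appearing in some (equivalently, any) maximal chain from $x$ to $y$ is $\Theta$-equivalent, which itself follows from applying the congruence properties to join and meet with intermediate elements. Once this is established, join-irreducibility of $\Theta$ in $\Con(\Lattice)$ forces $\Theta$ to coincide with a single summand $\cg(a,b)$.

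For $(2)\Rightarrow(3)$, given a cover $a\lessdot b$, I would choose $j\in\JI(\Lattice)$ minimal subject to $j\leq b$ but $j\not\leq a$; such a $j$ exists because $b$ is the join of the join-irreducibles below it and $b\not\leq a$. Minimality ensures $j_{*}\leq a$, because any join-irreducible $j'\leq j_{*}\leq j$ is strictly below $j$ hence, by the choice of $j$, satisfies $j'\leq a$, and $j_{*}$ is the join of such $j'$. Then $a\wedge j=j_{*}$ and $a\vee j=b$ (using that $a<a\vee j\leq b$ and $a\lessdot b$), so the covers $(a,b)$ and $(j_{*},j)$ are perspective, which gives $\cg(a,b)=\cg(j_{*},j)=\cg(j)$. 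Implication $(3)\Rightarrow(2)$ is immediate from the definition of $\cg(j)$.

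The main obstacle is the remaining step $(3)\Rightarrow(1)$, namely that $\cg(j)$ is actually join-irreducible in $\Con(\Lattice)$. Suppose $\cg(j)=\Theta_{1}\vee\Theta_{2}$. Since the join in $\Con(\Lattice)$ is the transitive closure of the union, there exists a sequence $j_{*}=z_{0},z_{1},\ldots,z_{m}=j$ with each consecutive pair lying in $\Theta_{1}\cup\Theta_{2}$. Replacing $z_{i}$ by $z_{i}'\defs(z_{i}\vee j_{*})\wedge j$ preserves equivalences (both operations respect any congruence) and pushes the whole sequence into the interval $[j_{*},j]$, which equals $\{j_{*},j\}$ precisely because $j$ is join-irreducible with unique lower cover $j_{*}$. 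The reduced sequence still connects $j_{*}$ to $j$, so at some index the pair $(j_{*},j)$ itself must appear in $\Theta_{k}$ for some $k\in\{1,2\}$; hence $\cg(j)\subseteq\Theta_{k}\subseteq\cg(j)$, yielding $\cg(j)=\Theta_{k}$. This collapsing-interval argument is the heart of the proof, and I expect it to be the only step where one must invoke more than formal manipulation of the congruence-lattice structure.
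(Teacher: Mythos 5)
Your proof is correct. Note that the paper does not prove this statement at all -- it is quoted verbatim from Freese--Je\v{z}ek--Nation (\emph{Free Lattices}, Theorem~2.30) -- and your argument is essentially the standard textbook one: decompose any congruence as the join of the principal congruences $\cg(a,b)$ over collapsed covers, transfer a cover $(a,b)$ to a perspective cover $(j_{*},j)$ via a minimal join-irreducible $j\leq b$ with $j\not\leq a$, and establish join-irreducibility of $\cg(j)$ by projecting a connecting sequence into the two-element interval $[j_{*},j]$ with $z\mapsto(z\vee j_{*})\wedge j$. All three steps are sound (the only glossed point, reducing an arbitrary collapsed pair $(x,y)$ to the comparable pair $(x\wedge y,x\vee y)$ in the first step, is routine), so there is nothing to correct.
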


A consequence of Theorem~\ref{thm:irreducible_congruences} is the existence of a surjective map
\begin{displaymath}
	\cg_{*}\colon\JI(\Lattice)\to\JI\bigl(\textsl{\textsf{Con}}(\Lattice)\bigr), \quad j\mapsto\cg(j).
\end{displaymath}
A finite lattice is \defn{congruence uniform} if $\cg_{*}$ is a bijection for both $\Lattice$ and $\Lattice^{\dual}$.  In that case, Theorem~\ref{thm:irreducible_congruences} implies the existence of an edge-labeling
\begin{equation}\label{eq:cu_labeling}
	\lambda\colon\Covers(\Lattice)\to\JI(\Lattice), \quad (a,b)\mapsto j,
\end{equation}
where $j$ is the unique join-irreducible element of $\Lattice$ with $\cg(j)=\cg(a,b)$.

The cover relations in a congruence-uniform lattice with the same label under $\lambda$ can be characterized as follows.  Two cover relations $(a,b),(c,d)\in\Covers(\Lattice)$ are \defn{perspective} if either $a\vee d=b$ and $a\wedge d=c$ or $b\vee c=d$ and $b\wedge c=a$.  In that case, we write $(a,b)\perspective (c,d)$.  See Figure~\ref{fig:perspective_covers} for an illustration.

\begin{figure}
	\centering
	\includegraphics[scale=1,page=13]{figures_aoco.pdf}
	\caption{The green edges indicate a pair of perspective cover relations in the hexagon lattice.}
	\label{fig:perspective_covers}
\end{figure}

\begin{lemma}[\cite{garver18oriented}*{Lemma~2.6}]\label{lem:perspective_covers_same_label}
	Let $\Lattice$ be a congruence uniform lattice with labeling $\lambda$.  For $(a,b)\in\Covers(\Lattice)$ and $j\in\JI(\Lattice)$, $\lambda(a,b)=j$ if and only if $(a,b)\perspective (j_{*},j)$.
\end{lemma}

This lemma has the following consequences.

\begin{corollary}\label{cor:minimal_cover_irreducible}
	Let $\Lattice$ be congruence-uniform and $(a,b)\in\Covers(\Lattice)$ and $j\in\JI(\Lattice)$.  If $\lambda(a,b)=j$, then $j\leq b$.
\end{corollary}
\begin{proof}
	By Lemma~\ref{lem:perspective_covers_same_label}, $(a,b)\perspective (j_{*},j)$, which implies that either $b\leq j$ or $j\leq b$.  If $b<j$, then $b\vee j_{*}=j$, which implies that $b\not\leq j_{*}$.  This means that there is a saturated chain from $b$ to $j$ which does not contain $j_{*}$; in particular $j$ has at least two lower covers.  This contradicts $j\in\JI(\Lattice)$.
\end{proof}

\begin{corollary}\label{cor:cu_labeling_no_duplicates}
	For any saturated chain $C$ of a congruence-uniform lattice $\Lattice$, the sequence $\lambda(C)$ does not contain duplicate entries.
\end{corollary}
\begin{proof}
	Let $C:a_{0}\lessdot a_{1}\lessdot\cdots\lessdot a_{s}$ be a saturated chain of $\Lattice$, and pick some index $i\in[s]$ such that $\lambda(a_{i-1},a_{i})=k\in\JI(\Lattice)$.  By Corollary~\ref{cor:minimal_cover_irreducible}, $k\leq a_{i}$.  Thus, for any $j\geq i$ it follows that $k\leq a_{j}$, and thus $k\vee a_{j}=a_{j}\lessdot a_{j+1}$.  Hence, $(k_{*},k)$ and $(a_{j},a_{j+1})$ are not perspective, and thus $\lambda(a_{j},a_{j+1})\neq k$ by Lemma~\ref{lem:perspective_covers_same_label}.
\end{proof}

\subsection{The core label order of a congruence-uniform lattice}
	\label{sec:core_label_order}
The labeling \eqref{eq:cu_labeling} of a congruence-uniform lattice $\Lattice=(L,\leq)$ gives rise to an alternate way of ordering $L$.  This order was first considered by N.~Reading in connection with posets of regions of simplicial hyperplane arrangements under the name \emph{shard intersection order}; see \cite{reading16lattice}*{Section 9-7.4}.  

For $a\in L$, we define its \defn{nucleus} by
\begin{displaymath}
	a_{\downarrow} \defs \bigwedge_{b\in L\colon b\lessdot a}b,
\end{displaymath}
and we define the \defn{core label set} of $a$ by
\begin{equation}\label{eq:core_label_set}
	\Psi_{\Lattice}(a) \defs \Bigl\{\lambda(b,c)\mid a_{\downarrow}\leq b\lessdot c\leq a\Bigr\}.
\end{equation}
If no confusion can arise, we drop the subscript ``$\Lattice$'' from the core label set.  The \defn{core label order} of $\Lattice$ is the poset $\CLO(\Lattice)\defs\bigl(L,\sqsubseteq\bigr)$, where $a\sqsubseteq b$ if and only if $\Psi(a)\subseteq\Psi(b)$.  See Figure~\ref{fig:core_label_order} for an illustration.

\begin{figure}
	\centering
	\begin{subfigure}[t]{.45\textwidth}
		\centering
		\includegraphics[scale=1,page=14]{figures_aoco.pdf}
		\caption{A congruence-uniform lattice with edge labeling $\lambda$.}
		\label{fig:cu_lattice}
	\end{subfigure}
	\hspace*{1cm}
	\begin{subfigure}[t]{.45\textwidth}
		\centering
		\includegraphics[scale=1,page=15]{figures_aoco.pdf}
		\caption{The core label order of the lattice from Figure~\ref{fig:cu_lattice}.  We have represented the elements by their core label sets.}
		\label{fig:cu_lattice_clo}
	\end{subfigure}
	\caption{The core label order of a congruence-uniform lattice.}
	\label{fig:core_label_order}
\end{figure}

Moreover, $\Lattice$ has the \defn{intersection property} if for all $a,b\in L$ there exists $c\in L$ such that $\Psi(a)\cap\Psi(b)=\Psi(c)$.  

\begin{theorem}[\cite{muehle19the}*{Theorem~4.8}]\label{thm:intersection_property}
	A finite, congruence-uniform lattice $\Lattice$ has the intersection property if and only if $\CLO(\Lattice)$ is a meet-semilattice.
\end{theorem}

If $\Theta$ is a lattice congruence on $\Lattice$, then a join-irreducible element $j\in\JI(\Lattice)$ is \defn{contracted} by $\Theta$ if $[j_{*}]_{\Theta}=[j]_{\Theta}$.  

\begin{lemma}[\cite{muehle19the}*{Lemma~4.9}]\label{lem:core_label_sets_quotients}
	Let $\Lattice=(L,\leq)$ be a finite, congruence-uniform lattice and let $\Theta\in\Con(\Lattice)$.  Let $\Sigma$ be the set of join-irreducible elements of $\Lattice$ contracted by $\Theta$.  For $a\in L$, the core label set $\Psi_{\Lattice/\Theta}\bigl([a]_{\Theta}\bigr)$ is in bijection with $\Psi_{\Lattice}(a)\setminus\Sigma$.
\end{lemma}

Moreover, a lattice congruence $\Theta$ on $\Lattice$ induces a canonical projection map
\begin{displaymath}
	\proj_{\Theta}\colon L\to L, \quad a\mapsto\min[a]_{\Theta},
\end{displaymath}
which identifies the quotient lattice $\Lattice/\Theta$ of $\Lattice$ by $\Theta$ with the restriction of $\Lattice$ to the minimal elements in the congruence classes of $\Theta$.  Then, Lemma~\ref{lem:core_label_sets_quotients} can be rephrased as: 
\begin{displaymath}
	\Psi_{\Lattice/\Theta}\bigl(\proj_{\Theta}(a)\bigr) = \Psi_{\Lattice}(a)\setminus\Sigma.
\end{displaymath}

\begin{proposition}[\cite{muehle19the}*{Proposition~4.11}]\label{prop:intersection_property_quotients}
	The intersection property is inherited by quotient lattices.
\end{proposition}

\subsection{Semidistributive lattices}
	\label{sec:semidistributive_lattices}
A finite lattice $\Lattice=(L,\leq)$ is \defn{join semidistributive} if for all $a,b,c\in L$ with $a\vee b=a\vee c$ follows $a\vee(b\wedge c)=a\vee b$.  We may define \defn{meet-semidistributive} lattices dually.  A lattice is \defn{semidistributive} if it is both join and meet semidistributive.

\begin{theorem}[\cite{day79characterizations}*{Theorem~4.2}]\label{thm:congruence_uniform_semidistributive}
	Every congruence-uniform lattice is semidistributive.
\end{theorem}

The converse of Theorem~\ref{thm:congruence_uniform_semidistributive} is not true, see for instance \cite{nation00unbounded}*{Section~3}.  Join-semidistributive lattices have another characteristic property: every element can be represented canonically as the join of a particular set of join-irreducible elements.

More precisely, a subset $A\subseteq L$ is a \defn{join representation} of $a\in L$ if $a=\bigvee A$.  A join representation is \defn{irredundant} if there is no proper subset of $A$ that joins to $a$.  For two irredundant join representations $A_{1}$ and $A_{2}$ of $a$ we say that $A_{1}$ refines $A_{2}$ if for every $a_{1}\in A_{1}$ there exists some $a_{2}\in A_{2}$ with $a_{1}\leq a_{2}$.  In other words, the order ideal generated by $A_{1}$ is contained in the order ideal generated by $A_{2}$.  A join representation of $a$ is \defn{canonical} if it is irredundant and refines every other irredundant join representation of $a$.  If a canonical join representation of $a$ exists, then it is an antichain of join-irreducible elements; the \defn{canonical joinands} of $a$.

\begin{theorem}[\cite{freese95free}*{Theorem~2.24}]\label{thm:join_semidistributive_canonical_representation}
	A finite lattice is join semidistributive if and only if every element admits a canonical join representation.
\end{theorem}

Figure~\ref{fig:extremal_not_semidistributive} shows a lattice that is not join semidistributive, because the top element does not have a canonical join representation.  Indeed, there are two irredundant join representations of the top element: the three atoms and the two highlighted elements, but none of these sets refines the other.

\begin{figure}
	\centering
	\includegraphics[scale=1,page=9]{figures_aoco.pdf}
	\caption{A lattice that is not join semidistributive.}
	\label{fig:extremal_not_semidistributive}
\end{figure}

Proposition~2.2 in \cite{reading15noncrossing} states that in any finite lattice $\Lattice$, every subset of a canonical join representation is again a canonical join representation.  Thus, the set of canonical join representations of $\Lattice$ forms a simplicial complex; the \defn{canonical join complex} of $\Lattice$.  If $\Lattice$ is join semidistributive, then the faces of this complex are indexed by the elements of $\Lattice$.  See \cite{barnard19canonical} for more information on the canonical join complex.

If $\Lattice$ is congruence uniform, then we can use the labeling from \eqref{eq:cu_labeling} to compute canonical join representations in $\Lattice$.

\begin{theorem}[\cite{garver18oriented}*{Proposition~2.9}]\label{thm:join_representation_lower_covers}
	Let $\Lattice=(L,\leq)$ be a finite, congruence-uniform lattice.  The canonical join representation of $a\in L$ is $\bigl\{\lambda(b,a)\mid b\lessdot a\bigr\}$.
\end{theorem}

\begin{corollary}\label{cor:canonical_joinands_lower_covers}
	Let $\Lattice=(L,\leq)$ be a finite, congruence-uniform lattice.  The number of canonical joinands of $a\in L$ equals the number of elements covered by $a$ in $\Lattice$.
\end{corollary}
\begin{proof}
	Let $a\in L$.  Let $b_{1},b_{2}\in L$ be such that $b_{1}\neq b_{2}$ and $\lambda(b_{1},a)=k=\lambda(b_{2},a)$.  By Corollary~\ref{cor:minimal_cover_irreducible}, $k\leq a$, and by Lemma~\ref{lem:perspective_covers_same_label}, $k\wedge b_{1}=k_{*}=k\wedge b_{2}$.  Since $\Lattice$ is semidistributive, it follows that $k_{*}=k\wedge(b_{1}\vee b_{2})=k\wedge a=k$, a contradiction.
	
	The claim now follows from Theorem~\ref{thm:join_representation_lower_covers}.
\end{proof}

\begin{proposition}[\cite{barnard18coxeter}*{Proposition~4.11}]\label{prop:canonical_joinands_quotient}
	Let $\Lattice$ be a finite, join-semidistributive lattice and let $\Theta$ be a lattice congruence on $\Lattice$.  If $j$ is a canonical joinand of $a\in L$ such that $j$ is not contracted by $\Theta$, then $j$ is a canonical joinand of $\proj_{\Theta}(a)$ in $L$.  Moreover, if $\proj_{\Theta}(a)=a$, then none of the canonical joinands of $a$ is contracted by $\Theta$.
\end{proposition}

Let $j\in\JI(\Lattice)$.  If the set $\{a\in L\mid j_{*}\leq a, j\not\leq a\}$ has a greatest element, then we denote it by $\kappa(j)$.  Whenever $\kappa(j)$ exists, it must be meet irreducible.  We recall two facts about the partial map $\kappa\colon\JI(\Lattice)\to\MI(\Lattice)$.

\begin{lemma}[\cite{freese95free}*{Corollary~2.55}]\label{lem:kappa_bijection}
	If $\Lattice$ is finite and semidistributive, then $\kappa$ is a bijection.  Thus, $\bigl\lvert\JI(\Lattice)\bigr\rvert=\bigl\lvert\MI(\Lattice)\bigr\rvert$.
\end{lemma}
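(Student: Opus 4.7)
The plan is to establish the bijection in three stages, using each half of semidistributivity separately, and then exhibit the inverse of $\kappa$ dually.

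First, I would show existence and well-definedness of $\kappa$ by using meet-semidistributivity. For each $j\in\JI(\Lattice)$, consider the set $S_{j}\defs\{a\in L\mid j_{*}\leq a,\;j\not\leq a\}$. It is nonempty, since $j_{*}\in S_{j}$. For any $a\in S_{j}$, the inequality $j\not\leq a$ combined with $j_{*}\lessdot j$ forces $a\wedge j=j_{*}$. If $a,b\in S_{j}$, then $a\wedge j=b\wedge j=j_{*}$, so meet-semidistributivity gives $(a\vee b)\wedge j=j_{*}$, whence $j\not\leq a\vee b$; since $j_{*}\leq a\vee b$ is automatic, we conclude $a\vee b\in S_{j}$. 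Thus $S_{j}$ is closed under binary joins and, being finite and nonempty, has a unique maximum, which is $\kappa(j)$.

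Next, I would verify that $\kappa(j)\in\MI(\Lattice)$. Suppose for contradiction that $\kappa(j)=x\wedge y$ with $x,y>\kappa(j)$. By maximality of $\kappa(j)$ in $S_{j}$, neither $x$ nor $y$ belongs to $S_{j}$, so $j\leq x$ and $j\leq y$, giving $j\leq x\wedge y=\kappa(j)$, which contradicts $j\not\leq\kappa(j)$. Hence $\kappa$ is a map $\JI(\Lattice)\to\MI(\Lattice)$.

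Finally, I would exhibit an inverse. Using the dual construction based on join-semidistributivity, for $m\in\MI(\Lattice)$ with unique upper cover $m^{*}$, the set $T_{m}\defs\{a\in L\mid a\leq m^{*},\;a\not\leq m\}$ has a unique minimum $\kappa^{d}(m)$, which is join-irreducible. Then I would prove $\kappa^{d}\circ\kappa=\mathrm{id}_{\JI(\Lattice)}$ and $\kappa\circ\kappa^{d}=\mathrm{id}_{\MI(\Lattice)}$.

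The main obstacle is this last step. Given $j\in\JI(\Lattice)$ and $m\defs\kappa(j)$, to show $\kappa^{d}(m)=j$ one needs (a) that $j\in T_{m}$, i.e.\ $j\leq m^{*}$, and (b) that $j$ is the \emph{least} element of $T_{m}$. For (a) one must show that $m\vee j$ covers $m$; the key idea is that any element strictly between $m$ and $m\vee j$ would, together with $m$, produce two incomparable strict upper covers of $m$, contradicting $m\in\MI(\Lattice)$. This uses the earlier identity $a\wedge j=j_{*}$ for elements of $S_{j}$ to transfer information about $j$ through the interval $[m,m\vee j]$. For (b), if $a\in T_{m}$ with $a\leq j$, then since $a\not\leq m$ while $j_{*}\leq m$, we must have $a\not\leq j_{*}$, and join-irreducibility of $j$ forces $a=j$. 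A symmetric argument gives $\kappa\circ\kappa^{d}=\mathrm{id}$, completing the bijection and the equality $\bigl\lvert\JI(\Lattice)\bigr\rvert=\bigl\lvert\MI(\Lattice)\bigr\rvert$.
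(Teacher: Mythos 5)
The paper itself gives no proof of this lemma---it is quoted from the cited reference \cite{freese95free}*{Corollary~2.55}---so your proposal has to stand as a self-contained argument, and in substance it does: it is the standard textbook proof. Stage 1 (for $a,b\in S_{j}$ one has $a\wedge j=b\wedge j=j_{*}$, so meet-semidistributivity gives $(a\vee b)\wedge j=j_{*}$, hence $S_{j}$ is closed under joins and by finiteness has a greatest element $\kappa(j)$) and Stage 2 (meet-irreducibility of $\kappa(j)$, noting also $\kappa(j)\neq\grtst$ because $j\not\leq\kappa(j)$) are correct as written, and the dual construction of $\kappa^{d}$ on $\MI(\Lattice)$ is justified by join-semidistributivity exactly as you say.

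The one place where your sketch goes off track is the justification of step (a). The claim $j\leq m^{*}$ (for $m=\kappa(j)$ with unique upper cover $m^{*}$) is correct, but the mechanism you propose---that an element strictly between $m$ and $m\vee j$ would produce two incomparable upper covers of $m$---is not the reason: meet-irreducibility only guarantees a unique cover, and elements strictly between $m$ and $m\vee j$ need not be covers of $m$ at all. The argument you actually need is already contained in your Stage 1: if $j\not\leq m^{*}$, then since $j_{*}\leq m\leq m^{*}$ we would have $m^{*}\in S_{j}$ with $m^{*}>\kappa(j)$, contradicting that $\kappa(j)$ is the greatest element of $S_{j}$. Hence $j\leq m^{*}$, so $j\in T_{m}$ (and incidentally $m\vee j=m^{*}$, since every element strictly above the meet-irreducible $m$ lies above $m^{*}$). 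With $j\in T_{m}$ in hand, your argument for (b) does close the loop: the minimum $\kappa^{d}(m)$ of $T_{m}$ satisfies $\kappa^{d}(m)\leq j$, and if the inequality were strict it would force $\kappa^{d}(m)\leq j_{*}\leq m$, contradicting $\kappa^{d}(m)\not\leq m$; so $\kappa^{d}(m)=j$, and the dual computation gives $\kappa\circ\kappa^{d}=\mathrm{id}$, completing the bijection. So the outline is sound; only the heuristic you offer for (a) should be replaced by the maximality argument above.
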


\begin{lemma}[\cite{freese95free}*{Lemma~2.57}]\label{lem:kappa_comparison}
	Let $\Lattice=(L,\leq)$ be a finite lattice, and let $j\in\JI(\Lattice)$ be such that $\kappa(j)$ exists.  For every $a\in L$ we have $a\leq\kappa(j)$ if and only if $j\not\leq j_{*}\vee a$.
\end{lemma}

\subsection{Trim Lattices}
	\label{sec:trim_lattices}
Let $\Lattice=(L,\leq)$ be a finite lattice.  The \defn{length} of a chain of $\Lattice$ is one less than its cardinality.  Let $\ell(\Lattice)$ denote the maximum length of a maximal chain of $\Lattice$.  For every finite lattice, the following holds:
\begin{equation}\label{eq:lattice_irreducibles_length}
	\ell(\Lattice)\leq\min\Bigl\{\bigl\lvert\JI(\Lattice)\bigr\rvert,\bigl\lvert\MI(\Lattice)\bigr\rvert\Bigr\}.
\end{equation}
If these three quantities are the same, \ie if $\bigl\lvert\JI(\Lattice)\bigr\rvert=\ell(\Lattice)=\bigl\lvert\MI(\Lattice)\bigr\rvert$, then $\Lattice$ is \defn{extremal}~\cite{markowsky92primes}.  It follows from \cite{markowsky92primes}*{Theorem~14(ii)} that any finite lattice can be embedded as an interval into an extremal lattice.  Consequently, extremality is not inherited by intervals.

In \cite{thomas06analogue}, a strengthening of extremality was introduced which does have this hereditary property.  An element $a\in L$ is \defn{left modular} if for all $b,c\in L$ with $b<c$ it holds that
\begin{displaymath}
	(b\vee a)\wedge c = b\vee(a\wedge c).
\end{displaymath}
If $\Lattice$ has a maximal chain of length $\ell(\Lattice)$ consisting entirely of left-modular elements, then $\Lattice$ is \defn{left modular}.  An extremal, left-modular lattice is \defn{trim}~\cite{thomas06analogue}.  

It was recently shown that any extremal, semidistributive lattice is already trim.

\begin{theorem}[\cite{thomas19rowmotion}*{Theorem~1.4}]\label{thm:semidistributive_extremal_trim}
	Every extremal semidistributive lattice is trim.
\end{theorem}

Figure~\ref{fig:extremal_not_semidistributive} shows the smallest extremal lattice that is not left modular.  It has only one chain of maximum length, but the non-atom marked in green is not left modular.

\subsection{The Galois graph of an extremal lattice}
	\label{sec:galois_graph}
Extremal lattices can be compactly represented in terms of a directed graph---the \defn{Galois graph}---which encodes the incomparability relation between join- and meet-irreducible elements~\cite{markowsky92primes}*{Theorem~11}.  

Let $\Lattice=(L,\leq)$ be extremal with $\ell(L)=n$, and fix a maximal chain $C:\least=a_{0}\lessdot a_{1}\lessdot\cdots\lessdot a_{n}=\grtst$.  Then, $\bigl\lvert\JI(\Lattice)\bigr\rvert=n=\bigl\lvert\MI(\Lattice)\bigr\rvert$.  We can label the join-irreducible elements by $j_{1},j_{2},\ldots,j_{n}$ and the meet-irreducible elements by $m_{1},m_{2},\ldots,m_{n}$ such that 
\begin{equation}\label{eq:extremal_order}
	j_{1}\vee j_{2}\vee\cdots\vee j_{s} = a_{s} = m_{s+1}\wedge m_{s+2}\wedge\cdots\wedge m_{n}
\end{equation}
for all $s$.  We can always order some of the irreducibles in such a way; the extremality guarantees that this is an ordering of \emph{all} irreducibles.

Using this order, we define the \defn{Galois graph} of $\Lattice$ following \cite{markowsky92primes}*{Theorem~2(b)} (see also \cite{thomas19rowmotion}*{Section~2.3}).  This is the directed graph $\Galois(\Lattice)$ with vertex set $[n]$, where $s\to t$ if and only if $s\neq t$ and $j_{s}\not\leq m_{t}$.  Figure~\ref{fig:extremal_not_congruence_uniform_galois_graph} shows the Galois graph of the extremal lattice in Figure~\ref{fig:extremal_not_congruence_uniform}.

\begin{figure}
	\centering
	\begin{subfigure}[t]{.3\textwidth}
		\centering
		\includegraphics[scale=1,page=10]{figures_aoco.pdf}
		\caption{The smallest extremal lattice that is not congruence uniform.  The join- and meet-irreducible elements are labeled according to \eqref{eq:extremal_order}.}
		\label{fig:extremal_not_congruence_uniform}
	\end{subfigure}
	\hspace*{.5cm}
	\begin{subfigure}[t]{.25\textwidth}
		\centering
		\includegraphics[scale=1,page=11]{figures_aoco.pdf}
		\caption{The Galois graph of the lattice in Figure~\ref{fig:extremal_not_congruence_uniform}.}
		\label{fig:extremal_not_congruence_uniform_galois_graph}
	\end{subfigure}
	\hspace*{.5cm}
	\begin{subfigure}[t]{.25\textwidth}
		\centering
		\includegraphics[scale=1,page=12]{figures_aoco.pdf}
		\caption{Another directed graph defined by the join-irreducible elements of the lattice in Figure~\ref{fig:extremal_not_congruence_uniform}.}
		\label{fig:extremal_not_congruence_uniform_other_graph}
	\end{subfigure}
	\caption{The Galois graph of an extremal lattice that is not congruence uniform.}
	\label{fig:galois_extremal_not_congruence_uniform}
\end{figure}

The Galois graph of an extremal lattice $\Lattice$ uniquely determines $\Lattice$ as we will briefly outline next.  In general, let $G=\bigl([n],E\bigr)$ be a directed simple graph.  An \defn{orthogonal pair} of $G$ is a pair $(A,B)$ with $A,B\subseteq [n]$, $A\cap B=\emptyset$ and there is no $(s,t)\in E$ with $s\in A$, $t\in B$.  An orthogonal pair is \defn{maximal} if both sets $A$ and $B$ are (cardinality-wise) maximal with that property.  We may define a partial order on the set of maximal orthogonal pairs of $G$ by setting $(A_{1},B_{1})\sqsubseteq (A_{2},B_{2})$ if and only if $A_{1}\subseteq A_{2}$ (or equivalently $B_{2}\subseteq B_{1}$).  The set of maximal orthogonal pairs of $G$ with respect to this partial order is a lattice.  Extremal lattices may now be characterized via this construction; see also \cite{thomas19rowmotion}*{Section~2.3}.

\begin{theorem}[\cite{markowsky92primes}*{Theorem~11}]\label{thm:extremal_lattice_representation}
	Every finite extremal lattice is isomorphic to the lattice of maximal orthogonal pairs of its Galois graph.  Conversely, given any directed graph $G=\bigl([n],E\bigr)$ such that $(s,t)\in E$ only if $s>t$, the lattice of maximal orthogonal pairs is extremal.
\end{theorem}

\begin{figure}
	\centering
	\begin{subfigure}[t]{.45\textwidth}
		\centering
		\includegraphics[scale=1,page=31]{figures_aoco.pdf}
		\caption{The lattice of maximal orthogonal pairs of the graph in Figure~\ref{fig:extremal_not_congruence_uniform_galois_graph}.}
		\label{fig:lattice_orthogonal_pairs_1}
	\end{subfigure}
	\hspace*{1cm}
	\begin{subfigure}[t]{.45\textwidth}
		\centering
		\includegraphics[scale=1,page=32]{figures_aoco.pdf}
		\caption{The lattice of maximal orthogonal pairs of the graph in Figure~\ref{fig:extremal_not_congruence_uniform_other_graph}.}
		\label{fig:lattice_orthogonal_pairs_2}
	\end{subfigure}
	\caption{Two lattices of maximal orthogonal pairs.}
	\label{fig:lattice_orthogonal_pairs}
\end{figure}

Figure~\ref{fig:lattice_orthogonal_pairs} shows two lattices of maximal orthogonal pairs.  Constructing the Galois graph of an extremal lattice requires to understand the incomparability relation between join- and meet-irreducible elements.  If an extremal lattice is additionally congruence uniform, we may simplify this construction by only taking the join-irreducible elements into account.

If $\Lattice$ is both extremal and congruence uniform, then we may define another ordering of the join-irreducible elements using the labeling $\lambda$ from \eqref{eq:cu_labeling}.  In particular, we pick a maximal chain of maximum length, and we order the join-irreducible elements according to the order in which they appear in $\lambda(C)$.  This is a total order of all join-irreducible elements of $\Lattice$ by Corollary~\ref{cor:cu_labeling_no_duplicates} and the assumption that $\Lattice$ is extremal.

\begin{lemma}\label{lem:cu_order_is_extremal_order}
	Let $\Lattice$ be a finite, extremal and congruence-uniform lattice, and fix a maximal chain $C$ of maximum length.  The ordering of $\JI(\Lattice)$ coming from \eqref{eq:extremal_order} agrees with the order in which the join-irreducible elements appear in $\lambda(C)$.  
\end{lemma}
\begin{proof}
	Let $\ell(\Lattice)=n$, and pick a maximal chain $C:\least=a_{0}\lessdot a_{1}\lessdot\cdots\lessdot a_{n}=\grtst$.  Suppose that---with respect to $C$---the order of the join-irreducible elements of $\Lattice$ from \eqref{eq:extremal_order} is $j_{1},j_{2},\ldots,j_{n}$.
	
	It follows that $a_{1}=j_{1}$, and therefore $\lambda(a_{0},a_{1})=j_{1}$.  Now let $t\in[n]$ and suppose that $\lambda(a_{s-1},a_{s})=j_{s}$ for all $s\leq t$.  Let $\lambda(a_{t},a_{t+1})=j$.  By Corollary~\ref{cor:cu_labeling_no_duplicates}, $j\in\JI(\Lattice)\setminus\{j_{1},j_{2},\ldots,j_{t}\}$.  By \eqref{eq:extremal_order}, $a_{t+1}=a_{t}\vee j_{t+1}$, and by Lemma~\ref{lem:perspective_covers_same_label} and Corollary~\ref{cor:minimal_cover_irreducible}, $a_{t+1}=a_{t}\vee j$.
	
	Since \eqref{eq:extremal_order} determines a linear order on $\JI(\Lattice)$, it follows that $j=j_{t}$.  The claim follows by induction.
\end{proof}

\begin{corollary}\label{cor:kappa_extremal_order}
	Let $\Lattice$ be a finite, extremal and congruence-uniform lattice, in which $\JI(\Lattice)$ and $\MI(\Lattice)$ are ordered as in \eqref{eq:extremal_order} with respect to some maximal chain of length $n=\ell(\Lattice)$.  
	\begin{enumerate}[\rm (i)]
		\item For $s\in[n]$, $m_{s}=\kappa(j_{s})$.
		\item For $s,t\in[n]$, $j_{s}\not\leq m_{t}$ if and only if $s\neq t$ and $j_{t}\leq {j_{t}}_{*}\vee j_{s}$.
		\item If $j_{t}\leq j_{s}$, then there is a directed edge from $s$ to $t$ in $\Galois(\Lattice)$.
	\end{enumerate}
\end{corollary}
\begin{proof}
	(i) Let $\ell(\Lattice)=n$, and let $s\in[n]$.  By Lemma~\ref{lem:kappa_bijection}, there exists $m=\kappa(j_{s})$.  By definition of $\kappa$, ${j_{s}}_{*}\leq m$ and $j_{s}\not\leq m$ and $m$ is maximal with this property.  Thus $j_{s}\wedge m={j_{s}}_{*}$ and $j_{s}\vee m=m^{*}$.  Thus $({j_{s}}_{*},j_{s})\perspective (m,m^{*})$, and Lemma~\ref{lem:perspective_covers_same_label} implies $\lambda({j_{s}}_{*},j_{s})=\lambda(m,m^{*})$.
	
	Let $C:\least=a_{0}\lessdot a_{1}\lessdot\cdots\lessdot a_{n}=\grtst$.  By Lemma~\ref{lem:cu_order_is_extremal_order}, $\lambda(a_{s-1},a_{s})=\lambda({j_{s}}_{*},j_{s})=\lambda(m,m^{*})$, and applying Lemma~\ref{lem:perspective_covers_same_label} once more yields $(a_{s-1},a_{s})\perspective(m,m^{*})$.  Since $m$ is meet irreducible, we conclude---by the dual statement of Corollary~\ref{cor:minimal_cover_irreducible}---that $a_{s-1}=m\wedge a_{s}$.  By \eqref{eq:extremal_order}, $m=m_{s}$.
	
	(ii) This follows from Lemma~\ref{lem:kappa_comparison} and (i).
	
	(iii) This follows from (ii) and the definition of $\Galois(\Lattice)$.
\end{proof}

If $\Lattice$ is extremal and congruence uniform, then Corollary~\ref{cor:kappa_extremal_order} implies that we may view $\Galois(\Lattice)$ as a directed graph with vertex set $\JI(\Lattice)$ where we have a directed edge $j_{s}\to j_{t}$ if and only if $s\neq t$ and $j_{t}\leq {j_{t}}_{*}\vee j_{s}$.  For extremal lattices that are not congruence uniform, this construction normally yields a directed graph that is not isomorphic to $\Galois(\Lattice)$.  This is illustrated in Figure~\ref{fig:galois_extremal_not_congruence_uniform}.  Figure~\ref{fig:extremal_not_congruence_uniform_other_graph} shows the directed graph with vertex set $[4]$ and a directed edge $s\to t$ if and only if $s\neq t$ and $j_{t}\leq {j_{t}}_{*}\vee j_{s}$ holds in the extremal lattice in Figure~\ref{fig:extremal_not_congruence_uniform}.

\subsection{Poset topology}
	\label{sec:poset_topology}
The \defn{order complex} $\Delta(\Poset)$ of a finite poset $\Poset=(P,\leq)$ is the simplicial complex whose faces are the chains of $\Poset$.  If $\Poset$ has a least or a greatest element, then $\Delta(\Poset)$ is always contractible.  If $\Poset$ is bounded, then we denote by $\overline{\Poset}\defs\bigl(P\setminus\{\least,\grtst\},\leq)$ the \defn{proper part} of $\Poset$. 

The \defn{M{\"o}bius function} of $\Poset$ is the map $\mu_{\Poset}\colon P\times P\to\mathbb{Z}$, inductively defined by $\mu_{\Poset}(a,a)\defs 1$ for all $a\in P$ and by
\begin{displaymath}
	\mu_{\Poset}(a,b) \defs -\sum_{c\in P\colon a\leq c<b}\mu_{\Poset}(a,c),
\end{displaymath}
for all $a,b\in P$ with $a\neq b$.  If $\Poset$ is bounded, then the \defn{M{\"o}bius number} of $\Poset$ is $\mu(\Poset)\defs\mu_{\Poset}(\least,\grtst)$. 

It follows from a result of P.~Hall that the M{\"o}bius number of $\Poset$ equals the reduced Euler characteristic of $\Delta(\overline{P})$; see \cite{stanley11enumerative_vol1}*{Proposition~3.8.5}.

Let us recall two results concerning the M{\"o}bius number of certain kinds of lattices.  The first one follows from \cite{liu99left} (see also \cite{mcnamara06poset}*{Theorem~8}) and \cite{bjorner96shellable}*{Theorem~5.9}.  

\begin{theorem}\label{thm:left_modular_topology}
	Let $\Lattice$ be a finite, left-modular lattice.  The order complex $\Delta(\overline{\Lattice})$ is homotopic to a wedge of $\bigl\lvert\mu(\Lattice)\bigr\rvert$-many spheres.
\end{theorem}

Consequently, if $\Lattice$ is left modular with $\mu(\Lattice)\in\{-1,1\}$, then $\Delta(\overline{\Lattice})$ is a sphere, and we call $\Lattice$ \defn{spherical}.

\begin{proposition}[\cite{muehle19the}*{Proposition~2.13}]\label{prop:meet_semidistributive_spherical}
	Let $\Lattice$ be a finite, meet-semidistributive lattice with $n$ atoms.  If the join of all atoms of $\Lattice$ is $\grtst$, then $\mu(\Lattice)=(-1)^{n}$.  Otherwise, $\mu(\Lattice)=0$.
\end{proposition}

\begin{bibdiv}
\begin{biblist}

\bib{armstrong09generalized}{article}{
      author={Armstrong, Drew},
       title={Generalized noncrossing partitions and combinatorics of {C}oxeter groups},
        date={2009},
     journal={Memoirs of the American Mathematical Society},
      volume={202},
}

\bib{athanasiadis07on}{article}{
      author={Athanasiadis, Christos~A.},
       title={On some enumerative aspects of generalized associahedra},
        date={2007},
     journal={European Journal of Combinatorics},
      volume={28},
       pages={1208\ndash 1215},
}

\bib{bancroft11shard}{article}{
      author={Bancroft, Erin},
       title={The shard intersection order on permutations},
        date={2011},
      eprint={arXiv:1103.1910},
}

\bib{barnard19canonical}{article}{
      author={Barnard, Emily},
       title={The canonical join complex},
        date={2019},
     journal={The Electronic Journal of Combinatorics},
      volume={26},
       pages={Research paper P1.24, 25 pages},
}

\bib{barnard20tamari}{article}{
      author={Barnard, Emily},
       title={The canonical join complex of the {T}amari lattice},
        date={2020},
     journal={Journal of Combinatorial Theory, Series $A$},
      volume={174},
       pages={Article 105207, 27 pages},
}

\bib{barnard18coxeter}{article}{
      author={Barnard, Emily},
      author={Reading, Nathan},
       title={Coxeter-bi{C}atalan combinatorics},
        date={2018},
     journal={Journal of Algebraic Combinatorics},
      volume={47},
       pages={241\ndash 300},
}

\bib{bennett94two}{article}{
      author={Bennett, Mary K.},
      author={Birkhoff, Garrett},
       title={Two families of {N}ewman lattices},
        date={1994},
     journal={Algebra Universalis},
      volume={32},
       pages={115\ndash 144},
}

\bib{bjorner88generalized}{article}{
      author={Bj{\"o}rner, Anders},
      author={Wachs, Michelle~L.},
       title={Generalized quotients in {C}oxeter groups},
        date={1988},
     journal={Transactions of the American Mathematical Society},
      volume={308},
       pages={1\ndash 37},
}

\bib{bjorner96shellable}{article}{
      author={Bj{\"o}rner, Anders},
      author={Wachs, Michelle~L.},
       title={Shellable nonpure complexes and posets {I}},
        date={1996},
     journal={Transactions of the American Mathematical Society},
      volume={348},
       pages={1299\ndash 1327},
}

\bib{bjorner97shellable}{article}{
      author={Bj{\"o}rner, Anders},
      author={Wachs, Michelle~L.},
       title={Shellable nonpure complexes and posets {II}},
        date={1997},
     journal={Transactions of the American Mathematical Society},
      volume={349},
       pages={3945\ndash 3975},
}

\bib{blass97moebius}{article}{
      author={Blass, Andreas},
      author={Sagan, Bruce~E.},
       title={M{\"o}bius functions of lattices},
        date={1997},
     journal={Advances in Mathematics},
      volume={127},
       pages={94\ndash 123},
}

\bib{caspard00lattice}{article}{
      author={Caspard, Nathalie},
       title={The lattice of permutations is bounded},
        date={2000},
     journal={International Journal of Algebra and Computation},
      volume={10},
       pages={481\ndash 489},
}

\bib{chapoton04enumerative}{article}{
      author={Chapoton, Fr{\'e}d{\'e}ric},
       title={Enumerative properties of generalized associahedra},
        date={2004},
     journal={S{\'e}minaire Lotharingien de Combinatoire},
      volume={51},
       pages={Research article B51b, 16 pages},
}

\bib{chapoton06sur}{article}{
      author={Chapoton, Fr{\'e}d{\'e}ric},
       title={Sur le nombre de r{\'e}flexions pleines dans les groupes de {C}oxeter finis},
        date={2006},
     journal={Bulletin of the Belgian Mathematical Society},
      volume={13},
       pages={585\ndash 596},
}

\bib{day79characterizations}{article}{
      author={Day, Alan},
       title={Characterizations of finite lattices that are bounded-homomorphic images or sublattices of free lattices},
        date={1979},
     journal={Canadian Journal of Mathematics},
      volume={31},
       pages={69\ndash 78},
}

\bib{duquenne94on}{article}{
      author={Duquenne, Vincent},
      author={Cherfouh, Ameziane},
       title={On permutation lattices},
        date={1994},
     journal={Mathematical Social Sciences},
      volume={27},
       pages={73\ndash 89},
}

\bib{freese95free}{book}{
      author={Freese, Ralph},
      author={Je{\v{z}}ek, Jaroslav},
      author={Nation, James~B.},
       title={{Free Lattices}},
   publisher={American Mathematical Society},
     address={Providence},
        date={1995},
}

\bib{funayama42on}{article}{
      author={Funayama, Nenosuke},
      author={Nakayama, Tadasi},
       title={On the distributivity of a lattice of lattice congruences},
        date={1942},
     journal={Proceedings of the Imperial Academy of Tokyo},
      volume={18},
       pages={553\ndash 554},
}

\bib{gaiha77adjacent}{article}{
      author={Gaiha, Prabha},
      author={Gupta, S.~K.},
       title={Adjacent vertices on a permutohedron},
        date={1977},
     journal={SIAM Journal on Applied Mathematics},
      volume={32},
       pages={323\ndash 327},
}

\bib{garver17enumerative}{article}{
      author={Garver, Alexander},
      author={McConville, Thomas},
       title={Enumerative properties of grid-associahedra},
        date={2017},
      eprint={arXiv:1705.04901},
}

\bib{garver18oriented}{article}{
      author={Garver, Alexander},
      author={McConville, Thomas},
       title={Oriented flip graphs of polygonal subdivisions and noncrossing tree partitions},
        date={2018},
     journal={Journal of Combinatorial Theory (Series A)},
      volume={158},
       pages={126\ndash 175},
}

\bib{geyer94on}{article}{
      author={Geyer, Winfried},
       title={On {T}amari lattices},
        date={1994},
     journal={Discrete Mathematics},
      volume={133},
       pages={99\ndash 122},
}

\bib{guilbaud71analyse}{collection.article}{
      author={Guilbaud, Georges-Th{\'e}odule},
      author={Rosenstiehl, Pierre},
       title={Analyse alg{\'e}brique d'un scrutin},
        date={1971},
   booktitle={{Ordres Totaux Finis}},
      editor={Barbut, Marc},
   publisher={Gauthier--Villars},
     address={Paris},
       pages={71\ndash 100},
      review={~},
}

\bib{kreweras72sur}{article}{
      author={Kreweras, Germain},
       title={Sur les partitions non crois{\'e}es d'un cycle},
        date={1972},
     journal={Discrete Mathematics},
      volume={1},
       pages={333\ndash 350},
}

\bib{liu99left}{thesis}{
      author={Liu, Shu-Chung},
       title={Left-modular elements and edge-labellings},
        type={Ph.D. Thesis},
 institution={Michigan State University},
        date={1999},
}

\bib{markowsky92primes}{article}{
      author={Markowsky, George},
       title={Primes, irreducibles and extremal lattices},
        date={1992},
     journal={Order},
      volume={9},
       pages={265\ndash 290},
}

\bib{markowsky94permutation}{article}{
      author={Markowsky, George},
       title={Permutation lattices revisited},
        date={1994},
     journal={Mathematical Social Sciences},
      volume={27},
       pages={59\ndash 72},
}

\bib{mcnamara06poset}{article}{
      author={McNamara, Peter},
      author={Thomas, Hugh},
       title={Poset edge-labellings and left modularity},
        date={2006},
     journal={European Journal of Combinatorics},
      volume={27},
       pages={101\ndash 113},
}

\bib{muehle19the}{article}{
      author={M{\"u}hle, Henri},
       title={The core label order of a congruence-uniform lattice},
        date={2019},
     journal={Algebra Universalis},
      volume={80},
       pages={Research paper 10, 22 pages},
}

\bib{muehle19tamari}{article}{
      author={M{\"u}hle, Henri},
      author={Williams, Nathan},
       title={Tamari lattices for parabolic quotients of the symmetric group},
        date={2019},
     journal={The Electronic Journal of Combinatorics},
      volume={26},
       pages={Research paper P4.34, 28 pages},
}

\bib{hoissen12associahedra}{book}{
      author={M{\"u}ller-Hoissen, Folkert},
      author={Pallo, Jean~Marcel},
      author={(Eds.), Jim~Stasheff},
       title={{Associahedra, Tamari Lattices and Related Structures}},
   publisher={Birkh{\"a}user},
     address={Basel},
        date={2012},
}

\bib{nation00unbounded}{article}{
      author={Nation, James~B.},
       title={Unbounded semidistributive lattices},
        date={2000},
     journal={Algebra and Logic},
      volume={39},
       pages={87\ndash 92},
}

\bib{provan80decompositions}{article}{
      author={Provan, J.~Scott},
      author={Billera, Louis~J.},
       title={Decompositions of simplicial complexes related to diameters of convex polyhedra},
        date={1980},
     journal={Mathematics of Operation Research},
      volume={5},
       pages={576\ndash 594},
}

\bib{reading06cambrian}{article}{
      author={Reading, Nathan},
       title={Cambrian lattices},
        date={2006},
     journal={Advances in Mathematics},
      volume={205},
       pages={313\ndash 353},
}

\bib{reading07clusters}{article}{
      author={Reading, Nathan},
       title={Clusters, {C}oxeter-sortable elements and noncrossing partitions},
        date={2007},
     journal={Transactions of the American Mathematical Society},
      volume={359},
       pages={5931\ndash 5958},
}

\bib{reading11noncrossing}{article}{
      author={Reading, Nathan},
       title={Noncrossing partitions and the shard intersection order},
        date={2011},
     journal={Journal of Algebraic Combinatorics},
      volume={33},
       pages={483\ndash 530},
}

\bib{reading15noncrossing}{article}{
      author={Reading, Nathan},
       title={Noncrossing arc diagrams and canonical join representations},
        date={2015},
     journal={SIAM Journal on Discrete Mathematics},
      volume={29},
       pages={736\ndash 750},
}

\bib{reading16lattice}{collection.article}{
      author={Reading, Nathan},
       title={Lattice theory of the poset of regions},
        date={2016},
   booktitle={{Lattice Theory: Selected Topics and Applications}},
      editor={Gr{\"a}tzer, George},
      editor={Wehrung, Friedrich},
   publisher={Birkh{\"a}user},
     address={Cham},
       pages={399--487},
      volume={2},
      review={~},
}

\bib{reading11sortable}{article}{
      author={Reading, Nathan},
      author={Speyer, David~E.},
       title={Sortable elements in infinite {C}oxeter groups},
        date={2011},
     journal={Transactions of the American Mathematical Society},
      volume={363},
       pages={699\ndash 761},
}

\bib{stanley11enumerative_vol1}{book}{
      author={Stanley, Richard~P.},
       title={{Enumerative Combinatorics, Vol. 1}},
     edition={2},
   publisher={Cambridge University Press},
     address={Cambridge},
        date={2011},
}

\bib{stump18cataland}{article}{
      author={Stump, Christian},
      author={Thomas, Hugh},
      author={Williams, Nathan},
       title={Cataland: {W}hy the {F}uss?},
        date={2018},
      eprint={arXiv:1503.00710},
}

\bib{tamari51monoides}{thesis}{
      author={Tamari, Dov},
       title={Mono{\"i}des pr{\'e}ordonn{\'e}s et cha{\^i}nes de {M}alcev},
        type={Th{\`e}se de math{\'e}matiques},
 institution={Universit{\'e} de Paris},
        date={1951},
}

\bib{thiel14on}{article}{
      author={Thiel, Marko},
       title={On the ${H}$-triangle of generalised nonnesting partitions},
        date={2014},
     journal={European Journal of Combinatorics},
      volume={39},
       pages={244\ndash 255},
}

\bib{thomas06analogue}{article}{
      author={Thomas, Hugh},
       title={An analogue of distributivity for ungraded lattices},
        date={2006},
     journal={Order},
      volume={23},
       pages={249\ndash 269},
}

\bib{thomas19rowmotion}{article}{
      author={Thomas, Hugh},
      author={Williams, Nathan},
       title={Rowmotion in slow motion},
        date={2019},
     journal={Proceedings of the London Mathematical Society},
      volume={119},
       pages={1149\ndash 178},
}

\bib{yanagimoto69partial}{article}{
      author={Yanagimoto, Takemi},
      author={Okamoto, Masashi},
       title={Partial orderings of permutations and monotonicity of a rank correlation statistic},
        date={1969},
     journal={Annals of the Institute of Statistical Mechanics},
      volume={21},
       pages={489\ndash 506},
}

\end{biblist}
\end{bibdiv}

\end{document}